\newcommand{\C}{\mathbb{C}}
\newcommand{\R}{\mathbb{R}}
\newcommand{\EE}{\mathbb{E}}
\newcommand{\PP}{\mathbb{P}}
\newcommand{\ol}{\overline}
\newcommand{\e}{\varepsilon}
\newtheorem{thm}{Theorem}
\newtheorem{lemma}[thm]{Lemma}
\newtheorem{cor}[thm]{Corollary}
\newtheorem{prop}[thm]{Proposition}
\newtheorem*{thmi}{Theorem}
\newtheorem*{cori}{Corollary}
\theoremstyle{remark} 
\newtheorem{remark}[]{Remark}
\newcommand{\be}{\begin{equation}}
\newcommand{\ee}{\end{equation}}
\title[On the geometry of random lemniscates]{On the geometry of random lemniscates}
\author{Antonio Lerario}   \thanks{Antonio Lerario, SISSA (Trieste). email: \textsf{lerario@sissa.it}.}
\author{Erik Lundberg}   \thanks{Erik Lundberg, Florida Atlantic University, Department of Mathematical Sciences. email: \textsf{elundber@fau.edu}.}
\begin{document}

\begin{abstract}
We investigate the geometry of a random rational lemniscate $\Gamma$, the level set $\{|r(z)|=1\}$ on the Riemann sphere $\hat\C=\C\cup\{\infty\}$ of the modulus of a random rational function $r$. 
We assign a probability distribution to the space of rational functions $r=p/q$ of degree $n$ by sampling $p$ and $q$ independently from the complex Kostlan ensemble
of random polynomials of degree $n$.

We prove that the average \emph{spherical length} of $\Gamma$ is 
$\frac{\pi^2}{2} \sqrt{n},$
which is proportional to the square root of the maximal spherical length.
We also provide an asymptotic for the average number
of points on the curve that are tangent to one of the
meridians on the Riemann sphere (i.e. tangent to one of the radial directions in the plane).

Concerning the topology of $\Gamma$, on a local scale, we prove that for every disk $D$ of radius $O(n^{-1/2})$ in the Riemann sphere  and any \emph{arrangement} 
(i.e. embedding) of finitely many circles $A\subset D$ there is a positive probability 
(independent of $n$) that $(D,\Gamma\cap D)$ is isotopic to $( D,A)$. (A local random version of Hilbert's Sixteenth Problem restricted to lemniscates.)
Corollary: the average number of connected components of $\Gamma$ increases linearly (the maximum rate possible according to a deterministic upper bound).
\end{abstract}

\maketitle

\section{Introduction}

The current paper investigates the geometry and topology of \emph{random} rational lemniscates, a theme that offers a complementary viewpoint (namely, seeking the typical outcome as opposed to the extremal outcome) 
to many classical studies and quickly leads to open problems that are simple to state.

\subsection{Lemniscates}
A rational lemniscate $\Gamma$ is the level set $\{z\in \C : |r(z)| = t\}$ of the modulus of a rational function $r(z)$.
For a generic rational function $r(z)$ of degree $n$, 
we can express this in terms of two complex polynomials $p$ and $q$ of degree $n$
\be\label{eq:lemni} \Gamma=\left\{z\in \C\,:\,\,\left|\frac{p(z)}{q(z)}\right|=1\right\}.\ee
Alternatively, writing the defining equation of $\Gamma$ as $p(z) \ol{p(z)} - q(z) \ol{q(z)} = 0$
and setting $z=x+i y$, we see that a lemniscate of degree $n$ is a real algebraic curve of degree $2n$.
In fact, some of the classical examples of algebraic curves are lemniscates \cite[p. 120-124]{Lawrence}.
From yet another point of view, 
lemniscates have been studied extensively in logarithmic potential theory \cite{PoulRans, SaffTotik};
notice that $\Gamma$ can be viewed as the zero set of the logarithmic potential
$\log|p(z)| - \log|q(z)|$ generated by $n$ positive charges (positioned at the zeros of $p$) and $n$ negative charges (positioned at the zeros of $q$).

Lemniscates appear in a variety of specific studies and applications
including approximation theory (e.g. Hilbert's lemniscate theorem and its generalizations \cite{Walsh, NagyTotik}),
topology of real algebraic curves \cite{Catanese2, Catanese1, Kharlamov},
elliptic integrals from classical mechanics \cite{Ayoub},
holomorphic dynamics \cite[p. 151]{Milnor}, 
numerical analysis \cite{TrefBau},
operator theory \cite{TrefEmbree},
so-called ``fingerprints'' of two-dimensional shapes \cite{EbenKhSh, Younsi},
moving boundary problems \cite{KhavMPT, LundTotik},
as critical sets of planar harmonic mappings \cite{KhavLS, LLtrunc},
and in the theory and applications of conformal mapping \cite{Bell, GustPSS, JeongTan}.

For a generic rational function $r(z) = p(z)/q(z)$
its degree $n$ determines the number of zeros and poles.
On the other hand, the geometry and topology of the level set $\Gamma$ is more diverse.
%Rational lemniscates are defined using a mixture of holomorphic and \emph{real} algebraic geometry and as a consequence can have complicated geometry --  as opposed to \emph{complex} algebraic objects that generically obey rigidity phenomena.
For example, the maximal (spherical) length of $\Gamma$ is $2 \pi n$ \cite[Theorem 2]{EremenkoHayman}, 
but any outcome in the interval $(0,2\pi n)$ can be realized. 
Concerning the topology of $\Gamma$, it is generically a smooth curve that can consist of at most $n$ topological circles (see Proposition \ref{prop:components} below), 
and all possible configurations of these circles in the plane (up to isotopy) can be realized (see Proposition \ref{prop:H16}).

\begin{figure}
\begin{center}
\includegraphics[width=0.27\textwidth]{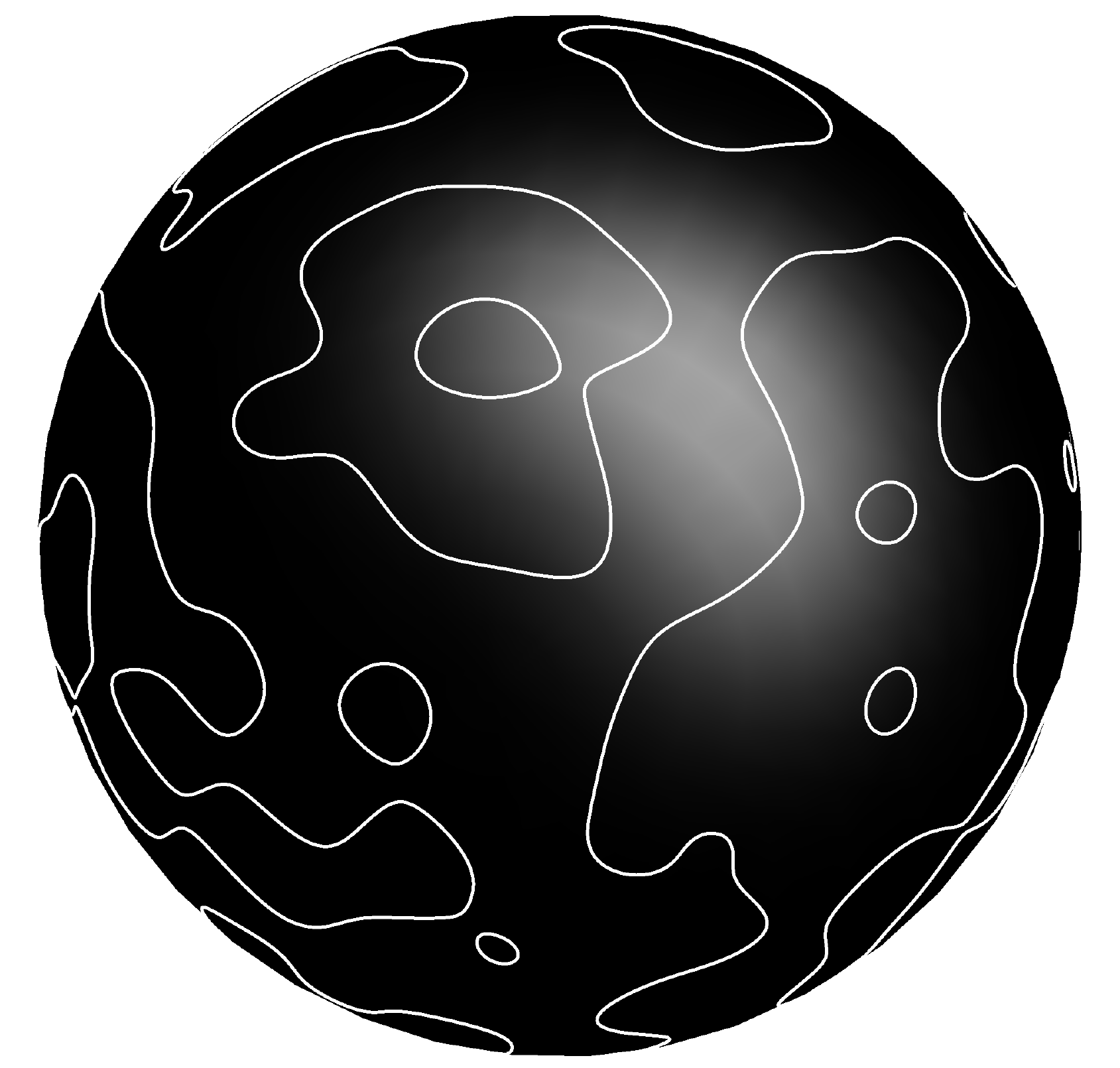} \hspace{.2in}
\includegraphics[width=0.27\textwidth]{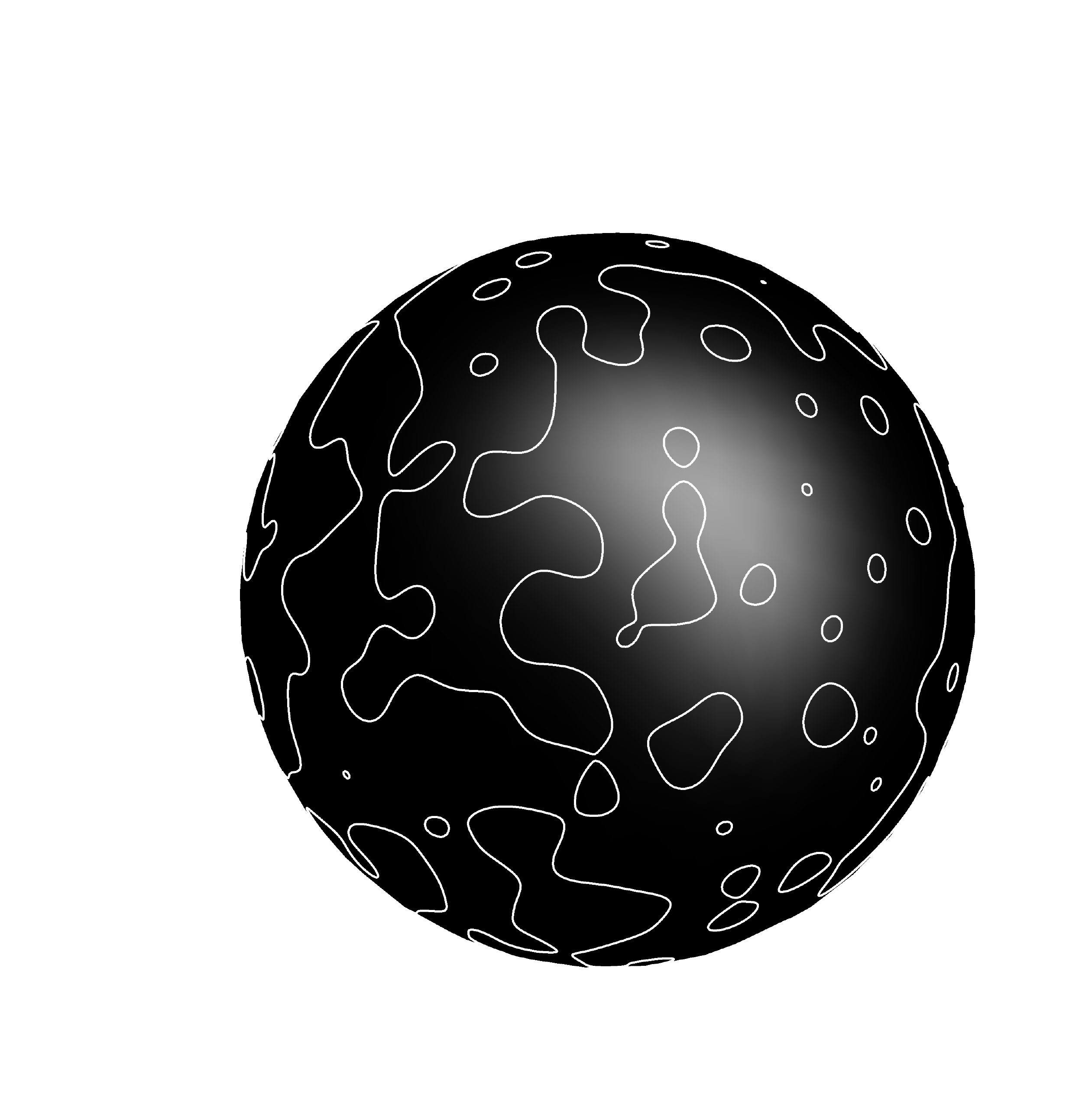}\hspace{.2in}
\includegraphics[width=0.27\textwidth]{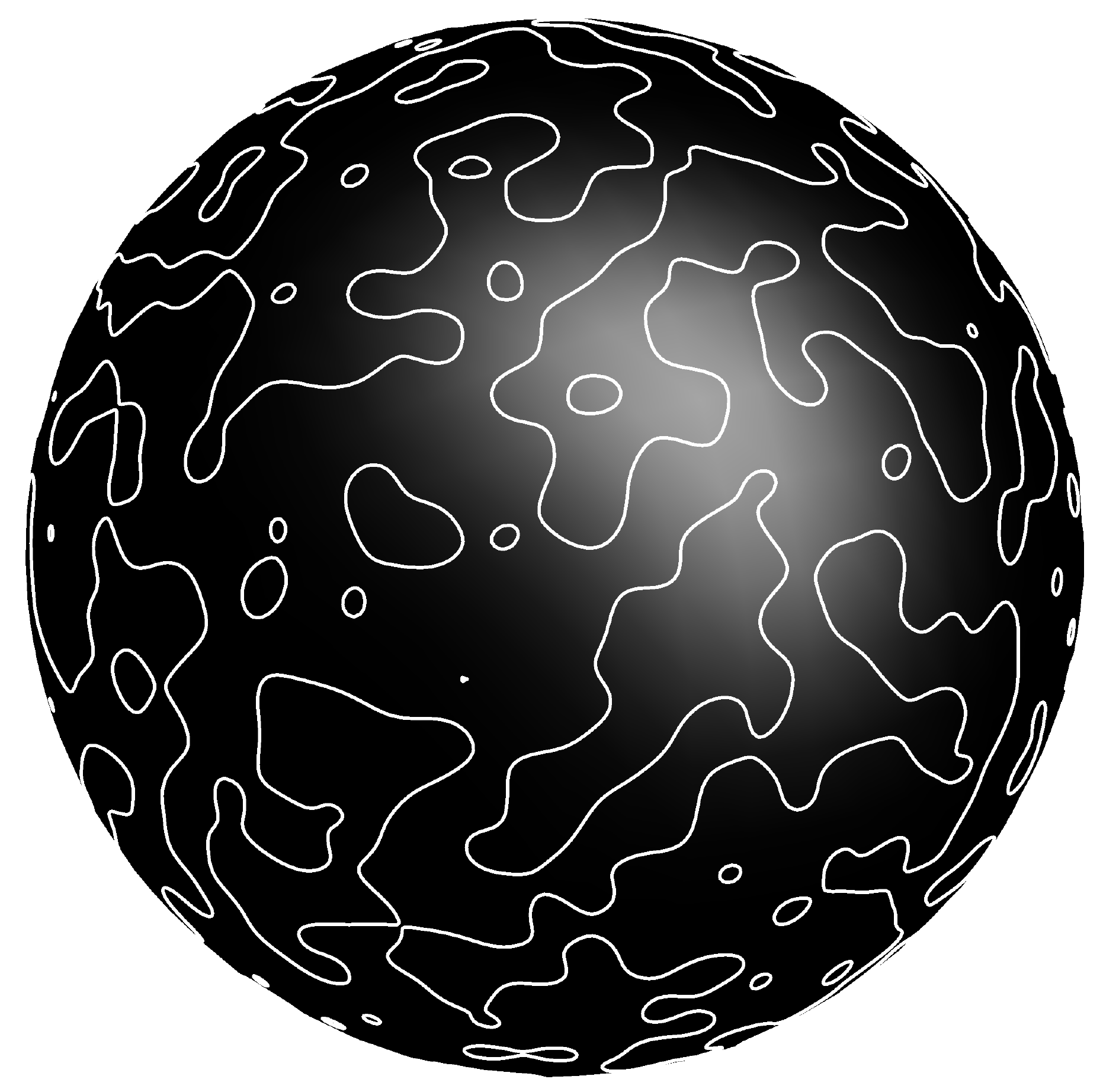}
\caption{Pictures of random lemniscates of degree $n=100,200,400$}\label{fig:Lemniscates}
\end{center}
\end{figure}

\subsection{Random lemniscates}
In this paper, motivated by the high level of variability in the geometry and topology of rational lemniscates,
 we investigate the \emph{average} outcome of certain properties, 
while considering the defining polynomials to be random complex \emph{Kostlan} polynomials (also called \emph{elliptic} polynomials, or \emph{Fubini-Study} polynomials). 
Namely, in \eqref{eq:lemni} we let:
\be \label{eq:pq}
p(z)=\sum_{k=0}^na_k z^k,\quad \textrm{and}\quad q(z)=\sum_{k=0}^nb_k z^k,
\ee
where the coefficients $a_k$ and $b_k$, are independent centered \emph{complex} Gaussians with:
\be \EE a_j\overline a_k=\delta_{jk}{n \choose k}\quad \textrm{and}\quad  \EE b_j\overline b_k=\delta_{jk}{n \choose k}.\ee
One important feature of this probability distribution is that the \emph{random} rational function $p/q$ is invariant under special unitary M\"obius transformations (see Lemma \ref{lemma:invariance} below):
\be \frac{p(z)}{q(z)}\sim \frac{p\left(\frac{\lambda z+\mu}{-\overline \mu z+\overline \lambda}\right)}{q\left(\frac{\lambda z+\mu}{-\overline \mu z+\overline \lambda}\right)}\quad \textrm{as random rational functions, if $\lambda\overline \lambda+\mu\overline\mu=1$}.\ee
\begin{remark}The poles and the zeros of $p/q$ are independent and distributed in the same way;  using the explicit form of this distribution, S. Zelditch and O. Zeitouni \cite{ZeZe} have derived a large deviation principle for the empirical measure associated to the zeros of a random complex Kostlan polynomial.\end{remark}

\subsection{The average length}
A. Eremenko and W. Hayman \cite{EremenkoHayman} have proved that the maximal length of $\Gamma$ 
(measured with respect to the spherical metric induced by the stereographic projection $\sigma:S^2\to \hat\C$) is $2\pi n$, 
the extremal case being given by the rational function $r(z)=z^n$. 
The following theorem is proved below for the average length $\EE |\Gamma|$ in our random model.
It is interesting that the average outcome turns out to have a simple answer\footnote{The known outcome for the Kostlan random curve is also simple, see Figure \ref{fig:comparisonaverage}.}
considering that 
expressing the length of \emph{specific} lemniscates can require elliptic integrals.

\begin{thmi}\label{thmi:length}The average spherical length of $\Gamma$ is $\frac{\pi^2}{2} \sqrt{n}.$
\end{thmi}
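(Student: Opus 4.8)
The plan is to compute $\EE|\Gamma|$ via an integral-geometric (Kac–Rice / coarea) approach.The plan is to compute $\EE|\Gamma|$ by the integral-geometric (Kac--Rice) formula applied to the smooth random field $u(z)=\log|p(z)|-\log|q(z)|$, whose zero set is exactly $\Gamma$. Writing $\|\cdot\|_{\mathrm{sph}}$ and $dA_{\mathrm{sph}}$ for the gradient norm and area element of the spherical metric, and $\rho_{u(z)}$ for the density of the real random variable $u(z)$, the expected length of the level set $\{u=0\}$ is
\[
\EE|\Gamma| = \int_{\hat\C} \EE\big[\,\|\nabla u(z)\|_{\mathrm{sph}} \,\big|\, u(z)=0\,\big]\, \rho_{u(z)}(0)\, dA_{\mathrm{sph}}(z).
\]
The first step is to justify this formula (the field is smooth off the zeros of $p$ and $q$, which almost surely avoid $\Gamma$, since on $\Gamma$ one has $|p|=|q|\neq 0$ a.s.) and to identify $\|\nabla u\|_{\mathrm{sph}}$: since $u=\operatorname{Re}\log(p/q)$ is locally the real part of a holomorphic function, its Euclidean gradient has norm $\big|(\log(p/q))'\big|=\big|\tfrac{p'}{p}-\tfrac{q'}{q}\big|$, and the conformal factor of the round metric converts this into the spherical norm.

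The key simplification is the $\mathrm{SU}(2)$-invariance of the law of $|p/q|$ recorded in Lemma~\ref{lemma:invariance}: the special unitary M\"obius transformations act as isometries of $\hat\C$ and preserve the distribution of $u$, so the Kac--Rice integrand is invariant under a group that acts transitively on the sphere, and is therefore constant. This reduces the computation to evaluating the integrand at the single point $z=0$ and multiplying by $\mathrm{Area}(\hat\C)=4\pi$.

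At $z=0$ everything becomes explicit in terms of the four independent complex Gaussian coefficients $a_0,a_1,b_0,b_1$ with $\EE|a_0|^2=\EE|b_0|^2=1$ and $\EE|a_1|^2=\EE|b_1|^2=n$. Here $u(0)=\log|a_0|-\log|b_0|$ and $\|\nabla u(0)\|_{\mathrm{sph}}$ is proportional to $|a_1/a_0-b_1/b_0|$. First I would compute $\rho_{u(0)}(0)$: since $|a_0|^2,|b_0|^2$ are independent $\mathrm{Exp}(1)$ variables, a change of variables gives $\rho_{u(0)}(0)=\tfrac12$. For the conditional gradient I would condition on $|a_0|=|b_0|=R$ and exploit the rotation invariance of $a_1,b_1$: writing $a_0=Re^{i\theta_a}$ and $b_0=Re^{i\theta_b}$, the quantity $a_1e^{-i\theta_a}-b_1e^{-i\theta_b}$ is a centered complex Gaussian $\zeta$ of variance $2n$ whose modulus is independent of $R$, so the conditional expectation factorizes as $\EE|\zeta|\cdot\EE[R^{-1}\mid u(0)=0]$. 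The first factor is a Rayleigh mean, $\EE|\zeta|=\sqrt{\pi n/2}$; the second requires the conditional law of $R$, which from the Kac--Rice conditioning has density $8R^3 e^{-2R^2}$ and yields $\EE[R^{-1}\mid u(0)=0]=\sqrt{\pi/2}$. Assembling these with the round-sphere conformal normalization produces the stated value $\frac{\pi^2}{2}\sqrt{n}$.

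I expect the main obstacle to be the bookkeeping around the conditioning and the normalization constants. One must treat the Kac--Rice conditioning on the \emph{value} $u(0)=0$ consistently (rather than naively conditioning on $|a_0|=|b_0|$), so that the conditional density of $R$ is the correct $8R^3e^{-2R^2}$; and one must keep the two appearances of the spherical conformal factor --- once inside $\|\nabla u\|_{\mathrm{sph}}$ and once inside $dA_{\mathrm{sph}}$ --- in agreement with the round metric of total area $4\pi$, since these are exactly the places where a spurious factor would spoil the clean constant $\pi^2/2$. A secondary technical point is verifying the hypotheses of Kac--Rice (nondegeneracy and the harmless nature of the logarithmic singularities of $u$), but the transitive symmetry reduces the remaining work to a finite-dimensional Gaussian integral.
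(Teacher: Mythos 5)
Your proposal is correct, and it takes a genuinely different route from the paper. The paper first reduces the length to a counting problem via the integral geometry (Crofton) formula, $\EE|\Gamma| = \pi\,\EE|\Gamma\cap S^1|$, applies the one-dimensional Kac--Rice formula along a meridian, and then evaluates the resulting integral by computing the joint density of the two Hermitian quadratic forms $X_1=|a_0|^2-|b_0|^2$ and $X_2=\mathrm{Re}(a_1\overline{a_0}-b_1\overline{b_0})$ via characteristic functions (Turin's formula for Hermitian forms in complex Gaussians) and residue calculus. You instead apply the two-dimensional Kac--Rice/coarea formula for the length of a level set directly, use the same $SU(2)$-invariance to collapse the integral to the single point $z=0$, and then sidestep all density computations for quadratic forms by exploiting the multiplicative structure of $u=\log|p/q|$: conditioning on $|a_0|=|b_0|=R$ makes the gradient conditionally a complex Gaussian, so only a Rayleigh mean and the explicit conditional law of $R$ are needed. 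I checked your ingredients: $\rho_{u(0)}(0)=\tfrac12$, the conditional density $8R^3e^{-2R^2}$, $\EE[R^{-1}\mid u(0)=0]=\sqrt{\pi/2}$, $\EE|\zeta|=\sqrt{\pi n/2}$, the conformal factor $2$ at the origin and total area $4\pi$, which assemble as
\begin{equation}
\EE|\Gamma| \;=\; 4\pi\cdot\underbrace{\tfrac12}_{\rho_{u(0)}(0)}\cdot\underbrace{\tfrac12}_{\text{conformal}}\cdot\sqrt{\tfrac{\pi n}{2}}\cdot\sqrt{\tfrac{\pi}{2}} \;=\; \frac{\pi^2}{2}\sqrt{n},
\end{equation}
so the arithmetic is right. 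What each approach buys: yours is more elementary and self-contained (no Fourier inversion, no residues, no external formula), but it leans on the logarithmic-derivative structure that is special to the length computation; the paper's Fourier machinery is heavier here precisely because it is designed as a paradigm that survives in the meridian-tangent computation (Theorem \ref{thm:tangents}), where the Kac--Rice density involves $\partial_x^2 f(0)$, which contains the term $2|a_1|^2$ and is therefore \emph{not} conditionally Gaussian given the low-order coefficients, so your conditioning trick would not factorize there. One technical caveat you rightly flag but should carry out: the field $u=\log|p|-\log|q|$ has logarithmic singularities at the (random) zeros of $p$ and $q$, so Kac--Rice does not apply to it verbatim on all of $\hat\C$; the cleanest fix is to run the argument for the globally smooth field $f=|p|^2-|q|^2$ (as the paper does) and note that on $\{f=0\}$ one has $\nabla f = 2|p|^2\,\nabla u$, so your gradient computation transfers, or else to excise neighborhoods of the singular points and use that $\Gamma$ almost surely avoids them.
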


The proof is given in Section \ref{sec:length} and consists of two parts.
The unitary invariance of the model corresponds to invariance under rotation of the Riemann sphere; 
using this fact we apply the integral geometry formula in order to 
reduce the computation of the expectation of the length to the evaluation of a Kac-Rice integral 
(equations \eqref{eq:IGF} and \eqref{eq:length} below).
This ultimately reduces to a one-dimensional integral:
\be\label{eq:KRintro}
\EE |\Gamma|= 2\pi^2 \int_{\R}|x_2|\rho(0,x_2)dx_2,
\ee
where $\rho(x_1,x_2)$ denotes the joint probability density function of the two random variables:
\be X_1=a_0 \ol{a_0} - b_0 \ol{b_0}\quad \textrm{and}\quad X_2= \frac{1}{2}\left(a_1 \ol{a_0} + a_0 \ol{a_1} - b_1 \ol{b_0} - b_0 \ol{b_1} \right),\ee
where $a_0,a_1,b_0,b_1$ are the random coefficients from \eqref{eq:pq} above.

The second part of the proof, an exact evaluation of the integral in \eqref{eq:KRintro}, is performed using the method of characteristic functions (i.e. Fourier transformation):
\be \rho(x_1,x_2) = \frac{1}{(2\pi)^2} \int_{\R^2} e^{-i(sx_1 + tx_2)} \hat{\rho}(s,t) ds dt,\quad\textrm{where}\quad \hat{\rho}(s,t) = \EE e^{i(sX_1 + tX_2)}.
\ee
Using the fact that $sX_1+tX_2$ is a \emph{Hermitian} quadratic form in Gaussian variables, we can obtain explicitly the characteristic function $\hat{\rho}(s,t)= \frac{1}{(1+nt^2/4)^2 + s^2}$.
The integration for recovering $\rho(0,x_2)=\frac{1}{2\sqrt{n}}e^{-2|x_2|/\sqrt{n}}$ is carried out using residues,
and the evaluation of \eqref{eq:KRintro} is then elementary.

This explicit computation of \eqref{eq:KRintro} serves as a paradigm for 
analyzing a finer geometric 
property of $\Gamma$, its number of ``meridian tangents''.

%\begin{align}\end{align}
\subsection{Meridian tangents}Before moving to the study of the topology of random lemniscates, 
we investigate another geometric feature\footnote{For the nodal set of random plane waves, a similar quantity, the average number of vertical tangents, has been computed by M. Krishnapur, to which the authors are grateful for having shared with them his preprint.}, the number $\nu(\Gamma)$ of times that $\Gamma$ is tangent to some real line through the origin in $\C$. 
Working in spherical coordinates $(\theta, \phi)$ on the Riemann sphere, $\nu(\Gamma)$ equals the number of critical points of the function $\theta|_\Gamma:\Gamma\to S^1$, 
and thus measures the number of times $\Gamma$ ``changes direction'' with respect to the meridian field. 
(The projection to $\theta$ is undefined at the origin and at infinity, but the probability that $\Gamma$ passes through
one of these points is zero.)

\begin{thmi}\label{thmi:meridian}The expectation $\EE \, \nu(\Gamma)$ of the number of meridian tangents of a random lemniscate $\Gamma$ 
is asymptotic to $\left( \frac{32-\sqrt{2}}{28} \right) n.$
\end{thmi}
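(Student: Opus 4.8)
The plan is to realize $\nu(\Gamma)$ as the number of common zeros of a system of two real equations and to evaluate its expectation with a Kac--Rice integral, exactly in the spirit of the length computation \eqref{eq:KRintro}. Writing $F=|p|^2-|q|^2$ for the defining function of $\Gamma$ (so that the gradient $\nabla F$ corresponds to $2\,\partial_{\bar z}F$ as a complex number), a point $z\in\Gamma$ is a meridian tangent precisely when the tangent line to $\Gamma$ is radial, i.e. when the normal $\nabla F$ is orthogonal to the radial direction $z$. This is the condition $G(z)=0$, where
\be G(z)=\mathrm{Re}\big(\bar z\,(p\,\ol{p'}-q\,\ol{q'})\big).\ee
Thus $\nu(\Gamma)=\#\{z\in\C:F(z)=0,\ G(z)=0\}$, and by the Kac--Rice formula for the expected number of common zeros of two functions of two real variables,
\be \EE\,\nu(\Gamma)=\int_\C \EE\big[\,|\det D(F,G)(z)|\ \big|\ F(z)=G(z)=0\,\big]\;\rho_z(0,0)\;dA(z),\ee
where $D(F,G)$ is the $2\times2$ Jacobian of $(F,G)$ and $\rho_z$ the joint density of $(F(z),G(z))$.

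Second, I would use the unitary invariance of the model (Lemma \ref{lemma:invariance}) to collapse this integral, reproducing the paradigm of Section \ref{sec:length}. Since the distribution of $\Gamma$ is invariant under the full group of rotations of the sphere, it is in particular invariant under the circle of rotations fixing any chosen point $w$; these act by ordinary rotations on the tangent plane $T_wS^2$, so the local law of $\Gamma$ at $w$ is \emph{isotropic}. Consequently the Kac--Rice density for the event ``$\Gamma$ passes through $w$ and is tangent there to a prescribed direction'' does not depend on that direction, and (by transitivity of the rotation group on points) it is the same at every $w$. Therefore the integrand above, measured against spherical area, is constant, and $\EE\,\nu(\Gamma)$ reduces to evaluating a single pointwise density at one convenient point $z_0$, with the radial direction playing the role of the meridian.

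Third comes the Gaussian computation of that density. At $z_0$ the relevant data are the $2$-jets $\big(p(z_0),p'(z_0),p''(z_0)\big)$ and $\big(q(z_0),q'(z_0),q''(z_0)\big)$, which form a centered complex Gaussian vector whose covariances are read off from the Kostlan kernel $(1+z\bar w)^n$ and its derivatives. In these variables $F$, $G$, and the entries of $D(F,G)$ are all \emph{Hermitian quadratic forms}, so $\det D(F,G)$ is a real quartic form while the conditioning $\{F=G=0\}$ is imposed on two further quadratic forms. I would compute $\rho_{z_0}(0,0)$ and the conditional expectation of $|\det D(F,G)|$ by the method of characteristic functions used for \eqref{eq:KRintro}: represent the $\delta$-constraints and the absolute value through their Fourier transforms and integrate out the Gaussians, using that the characteristic function of a Hermitian quadratic form in complex Gaussians is an explicit determinantal function of the Fourier variables. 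This reduces everything to a finite-dimensional oscillatory integral amenable to residues.

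The main obstacle is precisely this conditional expectation: unlike the length computation, where only $\EE|X_2|$ (the absolute value of a single form) was needed, here one must control the absolute value of a $2\times2$ determinant whose four entries are quadratic in the Gaussian jet, conditioned on two quadratic forms vanishing. Carried through at finite $n$ this produces an unwieldy expression, which is why the statement is only asymptotic: I would extract its leading behaviour as $n\to\infty$, using that the covariance matrix of the suitably normalized jet of $p,q$ has an explicit $n$-dependence (the second derivatives contributing the extra factor of $n$), so that the Gaussian integral localizes and its leading term is linear in $n$. Identifying the resulting numerical constant as $\tfrac{32-\sqrt2}{28}$ --- the $\sqrt2$ signalling a Gaussian moment of the off-diagonal part of the determinant --- is the delicate bookkeeping at the end.
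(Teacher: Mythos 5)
Your outline follows the same route as the paper's Section \ref{sec:meridian} (Kac--Rice for the system $\{F=0,\,G=0\}$, reduction by rotational invariance to a single pointwise density, then characteristic functions of Hermitian quadratic forms), and your tangency equation $G(z)=\mathrm{Re}\bigl(\bar z\,(p\,\ol{p'}-q\,\ol{q'})\bigr)$ is correct. The genuine gap is in your invariance step. You argue: the local law of $\Gamma$ at $w$ is isotropic, hence the Kac--Rice density for ``$\Gamma$ passes through $w$ tangent to a prescribed direction'' is independent of that direction, hence by transitivity it is the same at every $w$. But the Kac--Rice integrand for tangencies to a direction \emph{field} is not a function of the point and the direction alone: writing $G=\langle\nabla F,V\rangle$, the second Jacobian row is $DG=V^{T}D^{2}F+\nabla F^{T}DV$, so the density a priori involves the $1$-jet $DV$ of the field, and the $1$-jets of the meridian field at different latitudes are \emph{not} congruent under rotations (the field's derivative grows like $\cot\phi$ toward the poles). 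That this dependence can be fatal is shown by the orthogonal field of latitude circles: it enjoys exactly the same symmetry, yet the density of latitude tangencies per unit spherical area is non-constant and blows up like $\cot\phi$ at the poles (already visible for a uniformly random great circle, whose height-critical points concentrate near the poles). What rescues the meridian case is a cancellation you never invoke: on the conditioning event one has $\nabla F=\lambda V^{\perp}$, so $\det D(F,G)=-\,DF(V^{\perp})\bigl(\mathrm{Hess}F(V,V)+\langle\nabla F,\nabla_{V}V\rangle\bigr)$, and $\nabla_{V}V=0$ precisely because meridians (radial lines) are geodesics. Only after this cancellation does the density depend on nothing but the isotropic $2$-jet of $F$ and the direction $V(w)$, and only then do isotropy and transitivity give constancy with respect to spherical area. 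In the paper this is exactly the role of Lemma \ref{lemma:condi} (conditioning kills the term $G_{1}G_{4}$, leaving $|h_1h_2|$) together with the coordinates $(\theta,\phi)$ adapted to the meridians, where the surviving Jacobian factor $\sin\phi$ is precisely the spherical area element.

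The same omission undermines the computational part of your plan. As you describe it, one must evaluate $\EE\bigl[\,|\det D(F,G)|\bigm|F=G=0\bigr]$ with all four entries of the determinant quadratic in the Gaussians, i.e.\ the absolute value of a genuine quartic form in six complex Gaussians; Turin's formula gives the characteristic function of a single Hermitian quadratic form and does not apply to such a product-of-differences, so the ``finite-dimensional oscillatory integral amenable to residues'' does not materialize in that generality. After the factorization above, the density collapses to $\int_{\R^{2}}|h_{1}h_{2}|\,\rho(0,0,h_{1},h_{2})\,dh_{1}\,dh_{2}$ for the four Hermitian forms $(f,\partial_{x}f,\partial_{y}f,\partial_{x}^{2}f)(0)$, which is the object the paper can actually push through Fourier inversion in four dual variables (Turin's determinant formula, two residue integrations, a Bessel-function identity, and dominated convergence for the $n\to\infty$ asymptotics). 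Finally, the constant $\frac{32-\sqrt{2}}{28}$ admits no structural shortcut---it only emerges from that explicit evaluation---so as written your proposal supports at best the order of growth $\Theta(n)$, not the stated asymptotic.
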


The number $\nu(\Gamma)$ can also be used to estimate the number of components of $\Gamma$, using Morse inequalities.
Namely,
\be b_0(\Gamma) \leq \frac{\nu(\Gamma)}{2}+\#\{\textrm{components that loop around the origin}\}.\ee

In this context, 
the precise evaluation of the constant in the above theorem is important,
since the resulting upper bound has the same linear growth rate as the deterministic 
statement $b_0(\Gamma) \leq n$, but with a sharper constant,
see Section \ref{isec:H16} below 
(the number of components that loop around the origin
%Theorem  \ref{thmi:length} combined with integral geometry tells 
turns out to be $O(\sqrt{n})$, a lower order term).

Using rotational invariance the computation of $\EE \nu(\Gamma)$ is reduced again to a Kac-Rice type integral 
(this time for a system of equations):
\be \EE\nu(\Gamma)=4\pi\int_{\R^2}|h_1h_2|\rho(0,0,h_1,h_2)dh_1\, dh_2,\ee
where $\rho(0,0,h_1, h_2)$ denotes now the joint density of four random variables, each of which is a Hermitian quadratic form in Gaussian variables,
note the analogy with \eqref{eq:KRintro}.

The detailed form of the above integral is displayed in equation \eqref{eq:explicit} below,
and its exact evaluation is performed in Section \ref{sec:Fouriermeridian}.
While the procedure follows the same general outline as in the computation of the average length,
the method of characteristic functions is much more difficult to execute in the case of meridian tangents.

\subsection{Hilbert's Sixteenth Problem for random lemniscates}\label{isec:H16}
A rational lemniscate of degree $n$ is a special \emph{real} algebraic curve of degree $2n$ (whose defining equation is $|p|^2-|q|^2=0$).
The first part of Hilbert's sixteenth problem concerns the topology of real algebraic curves and their embeddings.
The problem has a long and fascinating history and is far from complete in its general form.
It is known that a smooth plane curve of degree $n$ can have at most $\frac{(n-1)(n-2)}{2}+1$ components (Harnack's bound\footnote{This is the bound for curves in the real \emph{projective} plane; 
the bound for curves in the affine plane has the same order $O(n^2).$}, see \cite{BCR}), 
but in the special case of a rational lemniscate we have indeed (see Proposition \ref{prop:components}):
\be\label{eq:introupper} b_0(\Gamma)\leq n.\ee
On the other hand, concerning the \emph{arrangement} of the components in the plane
all possibilities can occur. 
We prove this result in Proposition \ref{prop:H16}; 
it is well-known among experts, but we were unable to find an appropriate reference.

From the probabilistic point of view, this deterministic statement has a ``local'' version provided by the following theorem. 
We say that two pairs of manifolds $(B_1, A_1)$ and $(B_2, A_2)$ are isotopic if there exists a homeomorphism $\psi:B_2\to B_1$ that restricts to a homeomorphism $\psi|_{A_1}:A_1\to A_2$ (i.e. the two embeddings $A_1\hookrightarrow B_1$ and $A_2\hookrightarrow B_2$ look the same). 
\begin{thmi}For every disk $D$ of radius $O(n^{-1/2})$ in the Riemann sphere $\hat\C=\C\cup\{\infty\}$ and any \emph{arrangement} (i.e. embedding) of finitely many circles $A\hookrightarrow D$ 
there is a positive probability (independent of $n$) that $( D, \Gamma\cap D)$ is isotopic to $( D, A)$.
\end{thmi}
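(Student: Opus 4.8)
The plan is to pass to a local scaling limit, show that the \emph{limiting} random lemniscate realizes the arrangement $A$ with positive probability on a fixed disk, and then transfer this to finite $n$ using that the relevant isotopy-type event is open. By the unitary invariance of the model (Lemma \ref{lemma:invariance}) I may assume $D$ is centered at the origin, and I write $D$ as the image under $z\mapsto z/\sqrt n$ of a fixed disk $\tilde D$ of radius $R$ (taken large enough that the rescaled copy of $A$ sits inside $\tilde D$). Under this rescaling $\Gamma\cap D$ is carried to $\{\,|p(\zeta/\sqrt n)|=|q(\zeta/\sqrt n)|\,\}\cap\tilde D$, so everything reduces to the behavior of the rescaled pair $\big(p(\zeta/\sqrt n),\,q(\zeta/\sqrt n)\big)$ on $\tilde D$.

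First I would establish the scaling limit. The covariance kernel of a Kostlan polynomial is $\EE\, p(z)\ol{p(w)}=(1+z\ol w)^n$, so after rescaling
\be
\big(1+(\zeta/\sqrt n)\,\ol{(\eta/\sqrt n)}\big)^n\longrightarrow e^{\zeta\ol\eta}\qquad(n\to\infty),
\ee
uniformly on compact sets together with all derivatives. Hence $p(\zeta/\sqrt n)$ and $q(\zeta/\sqrt n)$ converge in distribution, in the $C^\infty_{\textrm{loc}}$ topology, to two independent Gaussian analytic functions $P$ and $Q$ with the Bargmann--Fock covariance $e^{\zeta\ol\eta}$ (standard for Gaussian fields, using convergence of the kernels and $C^k$ tightness). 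The limiting object is the random lemniscate $\Gamma_\infty=\{\,|P|=|Q|\,\}$, and it suffices to prove both that $\PP\big[(\tilde D,\Gamma_\infty\cap\tilde D)\ \textrm{isotopic to}\ (\tilde D,A)\big]>0$ and that this event is open in a topology respected by the convergence.

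Next comes realizability together with structural stability. By Proposition \ref{prop:H16} the arrangement $A$ is the isotopy type of some rational lemniscate $\{\,|P_0|=|Q_0|\,\}$ with $P_0,Q_0$ polynomials; since scaling and translation preserve the class of lemniscates, I may arrange that all components meeting $\tilde D$ lie strictly inside it and realize $A$, and, after a generic perturbation, that $\{\,|P_0|=|Q_0|\,\}$ is a regular level set of $|P_0|^2-|Q_0|^2$ inside $\tilde D$. Such a configuration is \emph{structurally stable}: by the implicit function theorem and a compactness argument (Thom's isotopy lemma), every pair $(f,g)$ that is $C^1$-close to $(P_0,Q_0)$ on $\tilde D$ has $\{\,|f|=|g|\,\}\cap\tilde D$ isotopic to $A$, so the event is $C^1$-open and nonempty. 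Finally, the Cameron--Martin space of the Bargmann--Fock field is the Fock space of entire functions, which contains all polynomials; hence $P_0$ lies in the support of the law of $P$ and $Q_0$ in that of $Q$, and by independence $(P_0,Q_0)$ lies in the support of $(P,Q)$. Therefore $(P,Q)$ falls in any prescribed $C^1$-neighborhood of $(P_0,Q_0)$ on $\tilde D$ with positive probability, giving $\PP[(\tilde D,\Gamma_\infty\cap\tilde D)\cong(\tilde D,A)]>0$.

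To conclude, I would combine openness with the portmanteau theorem: since the rescaled field pairs converge in distribution to $(P,Q)$ in a topology finer than $C^1(\tilde D)$, and the event ``$\{|f|=|g|\}\cap\tilde D$ is isotopic to $A$'' is $C^1$-open, one gets $\liminf_{n}\PP[(D,\Gamma\cap D)\cong(D,A)]\ge \PP[(\tilde D,\Gamma_\infty\cap\tilde D)\cong(\tilde D,A)]>0$, which is a lower bound independent of $n$ for all large $n$. I expect the main obstacle to be precisely this transfer made \emph{uniform}: one must upgrade the scaling limit to genuine $C^\infty_{\textrm{loc}}$ convergence in distribution of the pair and then verify that the isotopy-type event is open in that topology, i.e.\ that the realized configuration can be taken structurally stable with all components strictly interior to $\tilde D$, so that small $C^1$ perturbations neither create nor destroy components and none can sneak across $\partial\tilde D$. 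By comparison the realizability input from Proposition \ref{prop:H16} and the Gaussian support argument are comparatively soft; the care lies in the quantitative stability and in the choice of topology.
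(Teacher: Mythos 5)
Your proposal is correct in outline, but it takes a genuinely different route from the paper. The paper never constructs a limiting field: it works entirely at finite $n$, rescaling $p(\rho n^{-1/2}z)$ and $q(\rho n^{-1/2}z)$ and splitting each into three independent pieces --- the degree~$\le d$ part, which converges as a finite-dimensional Gaussian vector and therefore charges the stability neighborhoods $U_\alpha$, $U_\beta$ of the barrier polynomials $(\alpha,\beta)$ furnished by Proposition \ref{prop:H16}; a middle block of degrees $d+1,\dots,L$; and the tail of degrees $>L$, whose $C^1(D,\C)$-norm is made small with probability bounded below via an expectation estimate plus Markov's inequality (Lemma \ref{lemmaL}). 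You instead establish the Bargmann--Fock scaling limit $\left(1+\zeta\bar\eta/n\right)^n\to e^{\zeta\bar\eta}$, invoke the Gaussian support theorem (polynomials lie in the Cameron--Martin/Fock space of the limit) to see that $(P_0,Q_0)$ lies in the support of the limiting law, and transfer back to finite $n$ by portmanteau. Both arguments share the same deterministic skeleton: Proposition \ref{prop:H16} for realizability and Thom's isotopy lemma for $C^1$-structural stability (your stability step is exactly the paper's Lemma \ref{lemma:stable}). What your route buys is modularity and generality: it applies verbatim to any ensemble admitting a nondegenerate local scaling limit, and it shows the probability in question actually \emph{converges} to the corresponding Bargmann--Fock probability, not merely that it is bounded below. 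The cost is two pieces of machinery the paper avoids --- tightness/functional convergence in distribution of the rescaled pair in a $C^1$-topology, and the support theorem --- whereas the paper's argument is more elementary and quantitative, using only explicit coefficient estimates and Markov's inequality.

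One small repair is needed in your last step: the isotopy event itself is \emph{not} $C^1$-open (it contains pairs whose lemniscate has tangencies, or components touching $\partial\tilde D$, and these can be destroyed by arbitrarily small perturbations), so portmanteau cannot be applied to it directly. Apply it instead to the open $C^1$-ball around $(P_0,Q_0)$ provided by structural stability: this ball is contained in the isotopy event and carries positive limiting probability by the support argument, so $\liminf_n \PP\left\{(D,\Gamma\cap D)\sim(D,A)\right\}>0$ follows just the same. This also covers only $n$ large; for the finitely many remaining $n$ the probability is positive by a direct (non-asymptotic) version of the same argument, so the uniform constant exists.
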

%In other words, every arrangement of curves has a positive probability of appearing as a piece of $\Gamma$ in any neighborhood of radius $O(n^{-1/2}).$

The proof is inspired by the so called \emph{barrier method}, 
introduced by F. Nazarov and M. Sodin \cite{NazarovSodin} for the study of nodal sets of random spherical harmonics
and extended in \cite{LLstatistics, GaWe3, FLL} to the study of random algebraic hypersurfaces.
The current application uses transversality from differential topology combined with Markov's inequality from probability.
After rescaling the variable  $z\mapsto n^{-1/2}z$,
we show that with some positive probability (independent of $n$) the prescribed arrangement is not only
realized by a fixed number of terms but with a transversality that is stable with respect to the random perturbation represented by the remaining terms.
Showing that (with some positive probability) the perturbation is sufficiently controlled 
is the crucial step that relies on Markov's inequality.
This adaptation of the barrier method has some similarities with the one used to study real Kostlan curves in \cite{GaWe3}.
However, in the context of the random lemniscate model, 
the rate of scaling for the localized arrangement leads a maximal rate of growth (as $n \rightarrow \infty$)
for the global topological complexity; this follows from the fact that the statement in the corollary below
has the same order of growth as the deterministic upper bound.

%First we observe the existence of a given arrangement (see Proposition \ref{prop:H16}), and then we rescale the variable $z\mapsto n^{-1/2}z$. This rescaling places the arrangement in a small desired disk and affects the variance of the random coefficients in a crucial way. Namely, with some positive probability (independent of $n$) the prescribed arrangement is realized by a fixed number of terms, and at the same time (again with some positive probability independent of $n$) the transversality entailed by Proposition \ref{prop:H16} becomes stable with respect to the perturbation represented by the remaining terms.

We note that the above theorem implies in particular that the average number of components of $\Gamma$ is bounded below by $c_1n$ for some $c_1>0$:
\be\label{eq:introlower} \EE b_0(\Gamma)\geq c_1 n.\ee
It is in fact enough to consider the arrangement $S^1\hookrightarrow D$, 
which appears with some positive probability $c>0$ in a neighborhood with radius of order $n^{-1/2}$: 
covering the sphere with order $n$ many disjoint such neighborhoods provides on average order $n$ many components.
Combining \eqref{eq:introlower} with an upper bound (either the deterministic one or the improvement coming from the average number of meridian tangents), 
we derive the following corollary (see Section \ref{sec:components} for a more detailed statement and proof).

\begin{cori}The random lemniscate $\Gamma$ has on average order $n$ many components.
\end{cori}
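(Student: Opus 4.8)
The plan is to sandwich $\EE b_0(\Gamma)$ between two quantities of the same linear order, so that the corollary follows at once. The upper bound is immediate: taking expectations in the deterministic inequality \eqref{eq:introupper}, $b_0(\Gamma)\le n$, yields $\EE b_0(\Gamma)\le n$. (If the sharper constant is desired one may instead take expectations in the Morse-theoretic estimate $b_0(\Gamma)\le \tfrac{1}{2}\nu(\Gamma)+\#\{\text{components looping around the origin}\}$ and combine Theorem \ref{thmi:meridian} with the fact that the number of looping components is $O(\sqrt n)$; but for the qualitative order-$n$ statement the deterministic bound already suffices.)

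For the matching lower bound \eqref{eq:introlower} I would feed the simplest possible arrangement, a single circle $A=S^1\hookrightarrow D$, into the local Hilbert's Sixteenth theorem stated above. That theorem supplies a radius $r$ of order $n^{-1/2}$ and a probability $c>0$, independent of $n$, such that a disk $D$ of radius $r$ satisfies $(D,\Gamma\cap D)\cong (D,S^1)$ with probability at least $c$. The decisive point is that an isotopy of \emph{pairs} to $(D,S^1)$ places the circle $\Gamma\cap D$ in the interior of $D$, disjoint from $\partial D$; consequently that circle is an entire connected component of the global curve $\Gamma$, and not merely a sub-arc of some larger component leaving the disk.

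Now I would run a covering argument. Since the round sphere has area $4\pi$ while a spherical disk of radius $r\sim n^{-1/2}$ has area of order $r^2\sim n^{-1}$, one can pack $N$ pairwise \emph{disjoint} such disks $D_1,\dots,D_N$ with $N$ of order $n$. Writing $N_j$ for the indicator of the event $(D_j,\Gamma\cap D_j)\cong(D_j,S^1)$, the rotational (special unitary) invariance of the model, Lemma \ref{lemma:invariance}, moves any disk of radius $r$ onto any other by a rotation of $\hat\C$, so $\EE N_j=\PP(N_j=1)\ge c$ for every $j$ with the same constant $c$. Because the disks are disjoint, distinct successful events produce distinct components, whence $b_0(\Gamma)\ge \sum_{j=1}^N N_j$. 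Linearity of expectation then gives $\EE b_0(\Gamma)\ge \sum_{j=1}^N\EE N_j\ge cN\ge c_1 n$, which is \eqref{eq:introlower}; together with the upper bound this proves $\EE b_0(\Gamma)$ is of order $n$.

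The conceptual content has already been expended in proving the local theorem, so the present argument is essentially bookkeeping; the only steps needing genuine care are the two I have flagged. First, the claim that each successful disk contributes a bona fide component of $\Gamma$ rests entirely on the pair-isotopy (a sub-arc crossing $\partial D$ would not count), and this is exactly why the theorem is phrased for the pair $(D,\Gamma\cap D)$ rather than for $\Gamma\cap D$ alone. Second, the linear growth $N\sim n$ requires that disks of the radius $r$ furnished by the theorem can be packed with density bounded away from zero; this is routine on the round sphere (bounded curvature and injectivity radius), but it is precisely here that the $n^{-1/2}$ scaling of the local theorem is used in an essential way---a coarser localization scale would yield only sublinear $N$ and hence a weaker lower bound.
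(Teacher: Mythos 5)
Your proof is correct and takes essentially the same route as the paper's: the upper bound is the deterministic one from Proposition \ref{prop:components}, and the lower bound is obtained by packing order-$n$ disjoint spherical disks of radius $n^{-1/2}$, applying Theorem \ref{thm:isotopy} with the single-circle arrangement $A=S^1$ in each disk, and summing expectations of the indicator variables over the disjoint disks. The point you flag---that the isotopy of pairs forces $\Gamma\cap D$ to be a whole closed component contained in $D$ rather than an arc crossing $\partial D$---is precisely the observation the paper relies on when it asserts that each successful disk contains one component of $\Gamma$ entirely.
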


\subsection{Random curves}
\begin{figure}
$$
\begin{array}{ccc}
\cline{2-3}
\multicolumn{1}{c}{}&\multicolumn{1}{|c|}{}&\multicolumn{1}{|c|}{}\\

&\multicolumn{1}{|c|}{\textrm{random lemniscate}}&\multicolumn{1}{|c|}{\textrm{Kostlan random curve}}\\
\multicolumn{1}{c}{}&\multicolumn{1}{|c|}{}&\multicolumn{1}{|c|}{}\\

\hline
\multicolumn{1}{|c|}{}&\multicolumn{1}{|c|}{}&\multicolumn{1}{|c|}{}\\

\multicolumn{1}{|c|}{\textrm{length}}&\multicolumn{1}{|c|}{\frac{\pi^2}{2}\sqrt{n}}&\multicolumn{1}{|c|}{2\pi \sqrt{n}}\\
\multicolumn{1}{|c|}{}&\multicolumn{1}{|c|}{}&\multicolumn{1}{|c|}{}\\

\hline
\multicolumn{1}{|c|}{}&\multicolumn{1}{|c|}{}&\multicolumn{1}{|c|}{}\\

\multicolumn{1}{|c|}{\textrm{meridian tangents}}&\multicolumn{1}{|c|}{\sim \frac{32-\sqrt{2}}{28}n}&\multicolumn{1}{|c|}{\sim\frac{4\sqrt{2} }{\pi}n}\\
\multicolumn{1}{|c|}{}&\multicolumn{1}{|c|}{}&\multicolumn{1}{|c|}{}\\

\hline
\multicolumn{1}{|c|}{}&\multicolumn{1}{|c|}{}&\multicolumn{1}{|c|}{}\\

\multicolumn{1}{|c|}{\textrm{number of components}}&\multicolumn{1}{|c|}{\Theta(n)}&\multicolumn{1}{|c|}{\Theta(n)}\\
\multicolumn{1}{|c|}{}&\multicolumn{1}{|c|}{}&\multicolumn{1}{|c|}{}\\
\hline
\end{array}
$$
\caption{A comparison of the average outcomes for 
the random lemniscate and the Kostlan random curve.}
\label{fig:comparisonaverage}
\end{figure}
The study of random real algebraic varieties and their topology
has become very active recently, 
having interactions with Statistical Physics and Random Matrix Theory \cite{BogSch, FLL, GaWe2, GaWe3, Lerario2012, LeLu2, LLstatistics, NazarovSodin, NazarovSodin2, Sarnak, SarnakWigman}. 

%The main goal of this new subject is to understand the (topological) behavior of a family of real algebraic varieties defined by random polynomials, trying to bring a broad point of view in real algebraic geometry by replacing the idea of \emph{generic} (from complex algebraic geometry) with \emph{typical}.

%The generic complex plane curve of degree $n$ is a Riemann surface of genus $\frac{(n-1)(n-2)}{2}$ and its topology (as well as its embedding, up to isotopy, in projective space) is determined.  On the other hand if this cuve is defined by a real polynomial, the topology of its real part is quite flexible (see \cite{BCR, Wilson}), as we discussed already for the case of lemniscates.

Previous studies have considered several models of random algebraic curves.
Most attention has been focused on the \emph{real Fubini-Study} model \cite{FLL, LLstatistics}, 
the \emph{complex Fubini-Study} model \cite{GaWe2, GaWe3}
(also known as the \emph{Kostlan} model),
and random spherical harmonics \cite{NazarovSodin} 
(whose nodal sets comprise a special class of algebraic curves). 
Each of these models is a \emph{Gaussian} model (where the defining polynomials have Gaussian coefficients).
A random curve of degree $n$ sampled from the
real Fubini-Study model has on average order $n^2$ many components, while the average number of components for the Kostlan model has order $n$.
The random lemniscate model studied in this paper has some similarities
to the Kostlan model, see Figure \ref{fig:comparisonaverage} and Section \ref{sec:Kostlan} below.

A principal challenge in studying the random lemniscate model is that it is not Gaussian, 
but rather the coefficients are Hermitian forms in complex Gaussian variables.
Another interesting non-Gaussian random curve model is the \emph{determinantal} model discussed in \cite{LeLu2}: 
the defining polynomial  is the restriction to the unit sphere $S^2\subset \R^3$ of the polynomial $p(x,y,z)=\det (xQ_1+y Q_2+z Q_3)$, where $Q_1, Q_2, Q_3$ are random GOE$(n)$ matrices.
The coefficients are products of $n$-many Gaussians; very little is known on this model.

Nazarov and Sodin \cite{NazarovSodin2} have developed general results that 
establish the existence of asymptotic laws for the number of components (in addition to the order of growth).
It would be interesting if this approach could be adapted to non-Gaussian models such as the random lemniscate model.

\subsection{Related problems}
In this paper, we have considered the average spherical length of a rational lemniscate.
The alternative problem of determining the average \emph{planar} length of a \emph{polynomial} lemniscate will be considered in a forthcoming
joint work with Koushik Ramachandran.
Such is motivated by seeking a broad point of view on the \emph{Erd\"os lemniscate problem} \cite{EHP, Erdos} that asks for the maximal planar length of a monic polynomial lemniscate.
The conjectured \cite{EHP, Erdos} extremal case $\{|z^n-1| = 1 \}$ has recently been shown by A. Fryntov and F. Nazarov \cite{FryNaz} to be locally extremal.
They also confirmed that as $n \rightarrow \infty$ the global extremal length is $2 \pi n + o(n)$, which is asymptotic to the conjectured result.

Concerning the topology of lemniscates,
an attractive direction is to investigate the typical \emph{Morsification} of the modulus of a random rational function (or polynomial).
This requires studying the whole one-parameter family of lemniscates $\{ |r(z)| = t \}$ and the arrangement of the singular levels
that are encountered as $t$ varies.
Although a probabilistic study on this topic seems rather ambitious at this stage,
deterministic studies \cite{Catanese1, Catanese2} have already provided complete classifications
including combinatorial schemes for enumerating generic Morsifications (both in the case of polynomials and rational functions).

A slightly different model of random lemniscates arises in the study of random harmonic polynomials.
A (complex) harmonic polynomial is a polynomial of the form $F(z) = p(z) + \ol{q(z)},$ where $p$ and $q$ are analytic polynomials.
The critical set (where the Jacobian determinant vanishes) is a rational lemniscate (the ``critical lemniscate'' associated to $F$):
\be \left\{z\in \C\,:\,\,\left|\frac{p'(z)}{q'(z)}\right|=1\right\}.\ee
Critical lemniscates have been studied recently in \cite{KhavLS},
and in \cite{LLtrunc} we posed the problem of studying the average number of components of the critical lemniscate associated to a random harmonic polynomial.
In the case when $p$ and $q$ are Kostlan polynomials as in \cite{LLtrunc} of the same degree $n$,
we conjecture the same outcome as in the above corollary---that the average number of components of a critical lemniscate grows linearly with $n$.
Another object of interest is the image under $F$ of the critical set, referred to as the \emph{caustic}.
The caustic generically has cusp singularities,
and it would be interesting to determine the average number of cusps on the caustic associated to a random harmonic polynomial $F$.

\subsection{Structure of the paper}In Section \ref{sec:preliminaries} we introduce some preliminary tools. 
In Section \ref{sec:length} we prove the statement on the average length of random lemniscates. The average number of meridian tangents is computed in Section \ref{sec:meridian}. 
Section \ref{sec:topology} is devoted to Topology: the deterministic study of the possible arrangements of the components of a lemniscate is discussed in Section \ref{sec:H16} and the local probabilistic study in Section \ref{sec:local}; 
the application to the average number of components is in Section \ref{sec:components}.
For comparison the Kostlan model is briefly discussed in Section \ref{sec:Kostlan} (see also the table in Figure \ref{fig:comparisonaverage}).

\subsection*{Acknowledgements}
We wish to thank  A. Eremenko and V. Kharlamov for useful comments
and M. Krishnapur for sharing some unpublished notes on random plane waves.
We also thank Chelsey Hoff for assistance in generating the graphics
displayed in Figures \ref{fig:Lemniscates} and \ref{fig:LemKost}.
\section{Preliminaries}\label{sec:preliminaries}
\subsection{Kac-Rice type formulas}
An essential ingredient for our study is the \emph{Kac-Rice formula}. 
In a general setting: we want to compute the number of solutions of a system of random polynomial equations $F(z)=(f_1(z), \ldots, f_k(z))=0$, where $z\in\R^k$ (in our cases $z\in \C$ or $z\in S^2)$. 
%Typically the system is assumed to have some invariance under some group action. 
We denote by $\rho(u, v; z)$ the joint density of the random variable $(F(z),JF(z))$, i.e. for any measurable subset $U\subset \R^k\times \R^{k\times k}$:
\be \PP\{(F(z), JF(z))\in U \}=\int_{U}\rho(u, v; z)dudv,\quad u\in \R^k, v\in \R^{k\times k}.\ee
Then the Kac-Rice formula \cite[Theorem 11.2.1]{AdlerTaylor} asserts that
for a region $D$:
\be\label{eq:KR} \EE\#\{z\in D\,|\,F(z)=0\}=\int_{D}\int_{\R^{k\times k}}|\det(v)|p(0, v;z)dv dz \ee
(we refer to the $dz$ integrand as the \emph{Kac-Rice density}).
In order to apply \cite[Theorem 11.2.1]{AdlerTaylor} some regularity assumptions are required on the random field $F$; it easy to check that the cases of our interest these assumptions are always satisfied. 
%We will repeatedly use this formula combined with some geometric ideas (for example: the number of vertical tangents to a random curve $\{f(x,y)=0\}$ can be computed as above using $F=(f, \partial_x f)$). 
In the sequel we will need the following elementary Lemma.
\begin{lemma}\label{lemma:condi}Let $f:\R^2\to \R$ be a random field and set $F=(f, \partial_x f):\R^2\to \R^2$. Assume that $F$ satisfies the assumptions of  \cite[Theorem 11.2.1]{AdlerTaylor}. 
Denote by $\rho(x, y, h_1, h_2; z)$ the joint density of the random vector:
\be (X, Y, H_1, H_2)=(f(z), \partial_x f(z), \partial_y f(z), \partial_{x}^2 f(z)).\ee
Then the Kac-Rice density at $z\in \R^2$ for the system $\{f=\partial_x f=0\}$ can be written as:
\be k(z)=\int_{\R^2}|h_1h_2|\rho(0,0, h_1, h_2;z)dh_1\,dh_2.
\ee
\end{lemma}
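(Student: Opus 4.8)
The plan is to apply the Kac-Rice formula \eqref{eq:KR} to the map $F=(f,\partial_x f)\colon\R^2\to\R^2$ and to simplify the Jacobian determinant on the zero locus. First I would write out the Jacobian
\be JF=\begin{pmatrix} \partial_x f & \partial_y f \\ \partial_x^2 f & \partial_x\partial_y f\end{pmatrix},\ee
so that $\det JF=(\partial_x f)(\partial_x\partial_y f)-(\partial_y f)(\partial_x^2 f)$. The decisive observation is that the second component of $F$, namely $\partial_x f$, reappears as the $(1,1)$ entry of $JF$; hence on the zero set $\{f=0,\ \partial_x f=0\}$ this entry vanishes and
\be \det JF = -(\partial_y f)(\partial_x^2 f) = -H_1 H_2,\ee
so that $|\det JF|=|h_1h_2|$ there. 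In particular the mixed second derivative $\partial_x\partial_y f$, the $(2,2)$ entry, drops out of the determinant entirely along the zero locus.

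Next I would feed this into the Kac-Rice formula, and here lies the one point requiring care. Because $\partial_x f$ occurs both as a component of $F$ and as the $(1,1)$ entry of $JF$, the pair $(F(z),JF(z))$ is supported on a lower-dimensional affine subspace of $\R^2\times\R^{2\times 2}$, and therefore does \emph{not} possess a density with respect to Lebesgue measure on $\R^6$; a literal reading of \eqref{eq:KR} is thus ill-posed. I would circumvent this by invoking the equivalent conditional form of the Kac-Rice density,
\be k(z)=\EE\big[\,|\det JF(z)|\ \big|\ f(z)=0,\ \partial_x f(z)=0\,\big]\cdot p_{(f,\partial_x f)}(0,0;z),\ee
where $p_{(f,\partial_x f)}(\cdot,\cdot;z)$ is the density of $(f(z),\partial_x f(z))$. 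Substituting $|\det JF|=|H_1H_2|$ on the conditioning event turns the conditional expectation into $\EE[\,|H_1H_2|\mid X=0,Y=0\,]$, which no longer involves $\partial_x\partial_y f$, so that only the four variables $(X,Y,H_1,H_2)=(f,\partial_x f,\partial_y f,\partial_x^2 f)$ remain.

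Finally I would recombine the conditional expectation and the marginal density into a single integral against the joint density $\rho$. By the very definition of the conditional density,
\be \EE\big[\,|H_1H_2|\ \big|\ X=0,Y=0\,\big]\, p_{(X,Y)}(0,0;z)=\int_{\R^2}|h_1h_2|\,\rho(0,0,h_1,h_2;z)\,dh_1\,dh_2,\ee
and since $p_{(X,Y)}(0,0;z)=p_{(f,\partial_x f)}(0,0;z)$, this is precisely the asserted expression for $k(z)$. I expect the main obstacle to be exactly the redundancy of $\partial_x f$ flagged above: the correct statement hinges on recognizing that the duplicated Jacobian entry and the irrelevant entry $\partial_x\partial_y f$ both disappear on the zero set, after which the identity is a straightforward bookkeeping of the Kac-Rice density.
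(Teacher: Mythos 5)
Your proof is correct and follows essentially the same route as the paper: both write out the Jacobian of $F=(f,\partial_x f)$, pass to the conditional-expectation form of the Kac-Rice density, and use the vanishing of the duplicated entry $\partial_x f$ on the conditioning event $\{X=Y=0\}$ to reduce $|\det JF|$ to $|H_1H_2|$, which is then integrated against the joint density $\rho$. If anything, you are slightly more careful than the paper, which formally writes a joint density for the degenerate vector $(F(z),JF(z))$ before conditioning, whereas you flag that degeneracy explicitly and work with the well-posed conditional form throughout.
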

\begin{proof}Denote by $P(x,y, g_1, g_2, g_3, g_4;z)$ the joint density of:
\be(X, Y, G_1, G_2, G_3, G_4)=(f(z), \partial_xf(z), \partial_xf(z), \partial_y f(z), \partial_x^2 f(z), \partial_y\partial x_f(z))\ee
and by $p(x,y;z)$ the joint density of $(f(z), \partial_xf(z)).$
The claim follows from \eqref{eq:KR} using the following chain of equalities:
\begin{align}
k(z)&=\int_{\R^{2\times 2}}|g_1g_4-g_2g_3|P(0,0, g_1, g_2, g_3, g_4;z)dg_1\, dg_2\,dg_3\, dg_4\\
&=\EE\{|G_1G_4-G_2G_3|\quad\textrm{conditioned on}\quad X=Y=0\}\cdot p(0,0;z)\\
&= \EE\{|YG_4-G_2G_3|\quad\textrm{conditioned on}\quad X=Y=0\}\cdot p(0,0;z)\\
&= \EE\{|G_2G_3|\quad\textrm{conditioned on}\quad X=Y=0\}\cdot p(0,0;z)\\
&=\int_{\R^2}|h_1h_2|\rho(0,0, h_1, h_2;z )dh_1\,dh_2.
\end{align}

\end{proof}
%The invariance of the model will be used to simplify the $dx$ integration in \eqref{eq:KR} (in some cases %the Kac-Rice density will be independent of $x$, see Section \ref{sec:rational} below).

\subsection{Unitary invariance}
We endow the extended complex plane $\hat{\C}=\C\cup \{\infty\}$ with the metric induced from the unit sphere under stereographic projection $\sigma:S^2\to \hat{\C}$ (this is called the \emph{Riemann sphere}), defined for $(x,y,t)\in S^2\subset \R^3$ by:
\be\label{eq:stereo}
\sigma:(x, y, t)\mapsto\frac{x+i y}{1-t}.\ee
The action of the orthogonal group on the sphere defines in this way an action of $SO(3)$ on $\hat{\C}$, which we still call the orthogonal action.
\begin{lemma}\label{lemma:invariance}With the above choice of the random polynomials $p$ and $q$, the random rational function $z\mapsto \frac{p(z)}{q(z)}$
is invariant under the orthogonal group. In particular the random lemniscate $\Gamma$ is invariant.
\end{lemma}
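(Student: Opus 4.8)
The plan is to reduce the statement to the unitary invariance of the covariance kernel of a \emph{proper} (circularly symmetric) complex Gaussian field, after passing to homogeneous coordinates on $\mathbb{CP}^1=\hat\C$. The structural fact driving everything is that the complex Kostlan ensemble has covariance kernel $\EE[p(z)\ol{p(w)}]=\sum_k \binom{n}{k} z^k \ol{w}^k = (1+z\ol{w})^n$, the $n$-th power of the Fubini--Study kernel, and this kernel is invariant (up to an automorphy factor that will cancel in the quotient) under the $SU(2)$-action corresponding to the orthogonal action on the sphere.

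First I would set up homogeneous coordinates: identify $\hat\C$ with $\mathbb{CP}^1$ via $z=z_1/z_0$ and homogenize, $P(z_0,z_1)=z_0^n p(z_1/z_0)=\sum_{k=0}^n a_k z_0^{n-k} z_1^k$, and likewise $Q$. I would then recall the classical identification under which the orthogonal action of $SO(3)$ on $\hat\C$ is realized by the special unitary M\"obius transformations $z\mapsto \frac{\lambda z+\mu}{-\ol\mu z+\ol\lambda}$ with $|\lambda|^2+|\mu|^2=1$, each induced by a matrix $U\in SU(2)$ acting linearly on $(z_0,z_1)$. Under this action $P$ becomes $P\circ U$, again a homogeneous degree-$n$ polynomial, and in the affine chart the rational function $p/q$ becomes $(P\circ U)/(Q\circ U)$: here the dehomogenization factor $(-\ol\mu z+\ol\lambda)^n$ is common to numerator and denominator and cancels in the ratio.

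The heart of the argument is a covariance computation: a direct expansion gives $\EE[P(Z)\ol{P(W)}]=\sum_k \binom{n}{k}(z_0\ol{w_0})^{n-k}(z_1\ol{w_1})^k=\langle Z,W\rangle^n$, where $\langle Z,W\rangle=z_0\ol{w_0}+z_1\ol{w_1}$ is the standard Hermitian form on $\C^2$. Because the coefficients are proper complex Gaussians (so that the pseudo-covariance $\EE[P(Z)P(W)]$ vanishes), $P$ is a proper centered complex Gaussian field whose law is determined by the kernel $\langle Z,W\rangle^n$ alone. For $U\in SU(2)$, unitarity yields $\langle UZ,UW\rangle=\langle Z,W\rangle$, so $P\circ U$ has the same covariance kernel (and still vanishing pseudo-covariance), whence $P\circ U\overset{d}{=}P$. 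Since $P$ and $Q$ are independent and identically distributed, $(P\circ U,Q\circ U)\overset{d}{=}(P,Q)$, and therefore $\frac{P\circ U}{Q\circ U}\overset{d}{=}\frac{P}{Q}$; that is, the random rational function is invariant in distribution under the orthogonal action. The invariance of $\Gamma=\{|p/q|=1\}$ then follows at once, as $\Gamma$ is a deterministic measurable function of $r=p/q$.

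The hard part will be bookkeeping rather than anything conceptual: I would take care to track the automorphy factor $(-\ol\mu z+\ol\lambda)^n$ and to emphasize that precomposing $p$ alone with a M\"obius map produces a rational function rather than a polynomial, while the common factor cancels in the quotient --- this is precisely why the clean statement is about $r=p/q$ and why it is most transparent to argue in homogeneous coordinates. I would also be explicit that properness of the complex Gaussian is what lets the Hermitian covariance alone determine the law, and I would spell out the standard correspondence between $SO(3)$ acting on $S^2$ (via $\sigma$) and $PSU(2)$ acting on $\mathbb{CP}^1$, so that the ``orthogonal action'' in the statement matches the $SU(2)$-action used in the covariance computation.
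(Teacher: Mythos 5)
Your proposal is correct and follows the same overall reduction as the paper: both pass to homogeneous coordinates on $\mathbb{C}\textrm{P}^1$, use the covering $SU(2)\to SO(3)$ to translate the orthogonal action into special unitary substitutions on the homogenized pair, observe that the automorphy factor cancels in the quotient $p/q$, and thereby reduce the lemma to the $SU(2)$-invariance of the homogenized Kostlan ensemble. The one place you genuinely diverge is how that final invariance is established: the paper simply cites \cite[Theorem 1, Chapter 12.1]{BCSS}, whereas you prove it from scratch by computing the covariance kernel $\EE[P(Z)\ol{P(W)}]=\langle Z,W\rangle^n$, checking that the pseudo-covariance $\EE[P(Z)P(W)]$ vanishes, and invoking the fact that a centered proper complex Gaussian field is determined in law by its Hermitian covariance, which is manifestly invariant since $\langle UZ,UW\rangle=\langle Z,W\rangle$ for $U\in SU(2)$. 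This buys self-containedness and makes transparent exactly why the binomial weights $\binom{n}{k}$ are the right ones (they are precisely what turns the kernel into a power of the Fubini--Study kernel $1+z\ol{w}$), at the cost of a slightly longer argument; the paper's citation is shorter but defers the same computation to the literature. Your insistence on properness is a genuine point of care rather than pedantry: for complex Gaussians the Hermitian covariance alone does not determine the law unless $\EE[a_k^2]=0$, so this hypothesis (implicit in the paper's model of centered complex Gaussian coefficients) is exactly what your argument needs, and you correctly verify it is preserved under the substitution $P\mapsto P\circ U$.
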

\begin{proof}First recall that there is a diffeomorphism $\alpha:\mathbb{C}\textrm{P}^1\to \hat{\C}$ between the complex projective line and the Riemann sphere given by
\be \alpha: [z,w]\mapsto \frac{z}{w}.\ee
Moreover under the map $\alpha$ the metric on the Riemann sphere pulls back to (four times) the Fubini-Study metric.
The special unitary group $SU(2)$ acts by isometries on the projective line with the Fubini-Study metric by:
\be \left(\begin{smallmatrix} \lambda &\mu\\
-\overline\mu&\overline \lambda  \end{smallmatrix}\right)\cdot [z, w]=[\lambda z+\mu w,-\overline{\mu}z+\overline{\lambda}w].\ee
Under the identification provided by $\alpha$, this action is simply the covering homomorphism $SU(2)\to SO(3)$; in particular to prove the invariance of $p/q$ it is enough to prove the invariance under $SU(2)$ of $p/q\circ \alpha$ as a rational function on $\C\textrm{P}^1$. 
The composition $p/q\circ\alpha$ is simply given by:
\be [z, w]\mapsto\frac{~^h\! p(z, w)}{~^h\! q(z,w)}\ee
where $~^h\! p$ and $ ~^h\! q$ are the homogenizations of $p$ and $q$:
\be \label{eq:homogenization} ~^h\! p(z, w)=\sum_{k=0}^na_kz^kw^{n-k}\quad \textrm{and}\quad ~^h\! q(z, w)=\sum_{k=0}^nb_kz^k w^{n-k}.\ee
Finally, the fact that the random polynomials $~^h\! p$ and $~^h\! q$ are $SU(2)$ invariant follows from \cite[Theorem 1, Chapter 12.1]{BCSS}.
\end{proof}
\begin{remark}Notice that the composition of $p$ with a rotation of the Riemann sphere (a special type of M\"obius transformation) can change it to a rational function (and not just simply a polynomial). On the other hand, the composition of both $p$ and $q$ with a rotation changes this rational function $p/q$ to another rational function which (after clearing denominators) is the quotient of two polynomials; the previous lemma indicates that these two polynomials have the same distribution as the original ones.
\end{remark}

\begin{remark}[The Riemann sphere and the complex projective line] The pullback of the round metric $g_{S^2}$ on the sphere via the stereographic projection $\sigma$, defined in \eqref{eq:stereo}, is a Riemannian metric $\sigma^*g_{S^2}$ on $\hat\C$ called the spherical metric. Similarly, one can consider the map $s:\C\textrm{P}^1\to \hat{\C}$ defined by $s([z,w])=z/w$, and consider the pull-back of the Fubini-Study metric $g_{\C\textrm{P}^1}$ (see \cite{GH}). The relation between these two metrics is:
\be \sigma^*g_{S^2}=2 s^*g_{\C\textrm{P}^1}\ee
(in other words $\C\textrm{P}^1$ with the Fubini-Study metric is isometric to the sphere in $\R^3$ of radius $1/2$).
One can consider the lemniscate $\Gamma$ as defined on $\C\textrm{P}^1$ by:
\be\label{eq:gammaproj} \Gamma=\left\{[z,w]\in \C\textrm{P}^1\quad \textrm{s.t.}\quad \left|\frac{~^h\! p(z,w)}{^{h}q(z,w)}\right|=1\right\}.\ee
The two objects \eqref{eq:lemni} and \eqref{eq:gammaproj} are the same, except for the metric (the spherical lemniscate is twice as long as the projective). We notice that adopting the projective viewpoint the unitary invariance of Lemma \ref{lemma:invariance} becomes more clear; moreover the definition of a lemniscate can also be generalized to higher dimensions, simply as the preimage of the unit circle under a rational map $r:\C\textrm{P}^k\to \hat\C.$
\end{remark}

%\begin{remark}[The joint density of zeros and poles]\end{remark}

%\begin{remark}[Convergence of series] \end{remark}

%\begin{remark}[H16 for lemniscates] \end{remark}

%\begin{remark}[Related problems] \end{remark}

\section{Geometry: the average length of a rational lemniscate}\label{sec:length}
\begin{thm}\label{thm:length}
Let $\Gamma$ be a random lemniscate as defined by \eqref{eq:lemni} and \eqref{eq:pq}.
Then the expectation $\EE|\Gamma|$ of its spherical length is given by:
\be\EE|\Gamma|=\frac{\pi^2}{2}\sqrt{n}.\ee
\end{thm}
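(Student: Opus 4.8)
The plan is to reduce $\EE|\Gamma|$, via rotational invariance together with an integral-geometry (Crofton) formula, to a single Kac-Rice density, and then to evaluate that density explicitly using characteristic functions, ultimately arriving at \eqref{eq:KRintro}. First I would invoke Lemma \ref{lemma:invariance}: because the law of $\Gamma$ is invariant under the orthogonal action of $SO(3)$ on the Riemann sphere, the expected number of intersections $\EE\#(\Gamma\cap\ell)$ of $\Gamma$ with a geodesic $\ell$ (a great circle) is independent of $\ell$, and the associated Kac-Rice density is constant along any fixed $\ell$ (the rotations about the polar axis of $\ell$ act transitively on $\ell$). The spherical Crofton formula writes $|\Gamma|$ as a fixed multiple of the average of $\#(\Gamma\cap\ell)$ over all great circles; taking expectations and using invariance collapses this to a constant times the Kac-Rice density evaluated at one convenient point. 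I would take that point to be $z=0$ with $\ell$ the image of the real axis. Writing the defining function $\Phi(z)=|p(z)|^2-|q(z)|^2=p\ol{p}-q\ol{q}$ and restricting to $z=x\in\R$, the value and first derivative at the origin are exactly $\Phi(0)=X_1$ and $\tfrac12\partial_x\Phi(0)=X_2$, so the Kac-Rice density at $0$ is $\int_\R|x_2|\rho(0,x_2)\,dx_2$ up to an explicit constant; tracking the Crofton and Jacobian factors produces \eqref{eq:KRintro}.

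The core of the computation is the joint density $\rho(0,x_2)$. Since $X_1$ and $X_2$ are Hermitian quadratic forms in the complex Gaussian vector $\zeta=(a_0,a_1,b_0,b_1)$, so is $sX_1+tX_2=\zeta^{*}M\zeta$ for real $s,t$, where $M$ is the Hermitian matrix representing the form. I would compute the characteristic function $\hat\rho(s,t)=\EE\,e^{i(sX_1+tX_2)}$ using the standard Gaussian identity $\EE\,e^{i\,\zeta^{*}M\zeta}=\det(I-iM\Sigma)^{-1}$, with $\Sigma=\mathrm{diag}(1,n,1,n)$ the covariance of $\zeta$. The matrix $M\Sigma$ is block diagonal in the $a$- and $b$-variables, so the determinant factors into two $2\times 2$ determinants, giving $(1+nt^2/4)-is$ and $(1+nt^2/4)+is$; their product yields $\hat\rho(s,t)=\big((1+nt^2/4)^2+s^2\big)^{-1}$.

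It remains to invert the Fourier transform. Setting $x_1=0$, I would integrate first in $s$ by residues, using $\int_\R (a^2+s^2)^{-1}\,ds=\pi/a$ with $a=1+nt^2/4$, and then integrate in $t$ by residues again, using $\int_\R (t^2+\beta^2)^{-1}e^{-itx_2}\,dt=(\pi/\beta)e^{-\beta|x_2|}$ with $\beta=2/\sqrt n$. This gives
\[
\rho(0,x_2)=\frac{1}{2\sqrt n}\,e^{-2|x_2|/\sqrt n}.
\]
The final step is elementary: substituting into \eqref{eq:KRintro},
\[
\EE|\Gamma|=2\pi^2\int_\R|x_2|\,\frac{1}{2\sqrt n}\,e^{-2|x_2|/\sqrt n}\,dx_2=2\pi^2\cdot\frac{\sqrt n}{4}=\frac{\pi^2}{2}\sqrt n.
\]

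I expect the main obstacle to be bookkeeping rather than any conceptual difficulty: correctly normalizing the Crofton constant and the Jacobian factors in the passage to \eqref{eq:KRintro}, and choosing the right convention in the Gaussian determinant formula so that the $4\times 4$ determinant evaluates to $(1+nt^2/4)^2+s^2$. Once $\hat\rho$ is secured, the two residue integrations and the concluding one-dimensional integral are routine. (By contrast, as the introduction notes, the analogous inversion for meridian tangents is substantially harder, since it involves a genuinely two-parameter family of Hermitian forms.)
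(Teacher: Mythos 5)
Your proposal is correct and follows essentially the same route as the paper: integral geometry plus rotational invariance to reduce $\EE|\Gamma|$ to the Kac--Rice integral \eqref{eq:KRintro}, identification of $X_1=f(0)$ and $X_2=\tfrac12\partial_x f(0)$, the Gaussian determinant formula for the characteristic function of the Hermitian form (the paper applies Turin's $2\times 2$ formula to $q(\mathbf{a})$ and $q(\mathbf{b})$ separately rather than factoring a block-diagonal $4\times 4$ determinant, which is the same computation), and the two residue integrations yielding $\rho(0,x_2)=\tfrac{1}{2\sqrt n}e^{-2|x_2|/\sqrt n}$. The only difference is cosmetic, and all your constants match the paper's.
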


\begin{proof}
The proof is divided into two sections.
First we use integral geometry and invariance of the model
to show that $\EE|\Gamma|= 2 \pi^2 C(n),$
where $C(n)$ is a Kac-Rice integral.
Then in Section \ref{sec:Fourier},
we compute $C(n) = \frac{\sqrt{n}}{4}$ using Fourier analysis.

\subsection{Reduction to a Kac-Rice integral}
Below we will let $S^1$ denote the meridian on the sphere $S^2$ that corresponds 
(under stereographic projection) to the real axis in the complex plane.
Identify a point $\theta \in S^1$ on this meridian with the angle measured from the direction $(0,0,-1)$;
the image of $\theta$ under stereographic projection is $\tan \left( \frac{\theta}{2} \right)$.

Applying the integral geometry formula \cite{Howard}, we have:
$$ \frac{|\Gamma|}{\pi} =  \int_{SO(3) }|\Gamma \cap g S^1| dg$$
where the integral over the orthogonal group is with respect to the normalized Haar measure.
Taking the expectation on both sides, we find that
\be\label{eq:IGF}
\EE|\Gamma|= \pi \EE |\Gamma \cap S^1|,
\ee
i.e., the average length is determined by the 
average number of zeros of the function $h(\theta) = f\left( \tan \frac{\theta}{2} \right)$
over the interval $-\pi < \theta < \pi$.

This number can be computed using the Kac-Rice formula.
In terms of the joint probability density $\rho(h,h';\theta)$ of $h(\theta)$ and $h'(\theta)$,
the average number of zeros is given by:
\be
\EE |\Gamma \cap S^1| = \int_{-\pi}^\pi \int_{-\infty}^\infty |h'| \rho(0,h';\theta) dh' d \theta .
\ee
Using again the rotational invariance, the inside integral is independent of $\theta$,
and we have:
\be\label{eq:lengthreduced}
\EE |\Gamma \cap S^1| =  2 \pi \int_{-\infty}^\infty |h'| \rho(0,h';0) dh'.
\ee
%The dependence on $n$ of this integral can be entirely factored out as follows.
Let now $\rho(x_1,x_2)$ denote the joint density of the random variables $X_1,X_2$ defined as:
$$X_1 = h(0) = f(0) = a_0 \ol{a_0} - b_0 \ol{b_0},$$
and 
\begin{align*}
X_2= h'(0) &= \frac{1}{2} \partial_x f (0) \\
&= \frac{1}{2} \left( p'(0)\ol{p(0)} +p(0)\ol{p'(0)} - q'(0)\ol{q(0)} - q(0)\ol{q'(0)} \right)\\
&= \frac{1}{2}\left(a_1 \ol{a_0} + a_0 \ol{a_1} - b_1 \ol{b_0} - b_0 \ol{b_1} \right).
\end{align*}
%where $\tilde{a}_1 =  \frac{2 a_1}{\sqrt{n}}$, and $\tilde{b}_1 = \frac{2 b_1}{\sqrt{n}}$
%are now complex normals.

%Hence, the density $\rho(x,y)$ does not depend on $n$,
%and with this change of variables \eqref{eq:lengthreduced} becomes

%\be\label{eq:length}
%\EE |\Gamma \cap S^1| =  \pi \int_{-\infty}^\infty |y| \rho(0,y) dy.
%\ee

Referring back to \eqref{eq:IGF} we thus have:
\be\label{eq:length}
\EE|\Gamma| = 2 \pi^2 C(n) ,\ee
where
\be\label{eq:C}
C(n) = \int_{-\infty}^\infty |x_2| \rho(0,x_2) dx_2,\ee
is a Kac-Rice type integral that we compute next.

\subsection{Computation of the Kac-Rice integral}\label{sec:Fourier}
Finally, we compute the integral appearing in \eqref{eq:C} using the method of characteristic functions 
(i.e., Fourier transformation).

By the Fourier inversion formula, we have:
\be\label{eq:inversion}
\rho(x_1,x_2) = \frac{1}{(2\pi)^2} \int \int e^{-i(sx_1 + tx_2)} \hat{\rho}(s,t) ds dt,
\ee
where
\be\label{eq:char}
 \hat{\rho}(s,t) = \EE e^{i(sX_1 + tX_2)}.
\ee

Rearranging the expression in the exponent, we have:
$$sX_1 + tX_2 = s a_0 \ol{a_0} + \frac{t}{2} (a_1 \ol{a_0} + a_0 \ol{a_1}) + s b_0 \ol{b_0} + \frac{t}{2} (b_1 \ol{b_0} + b_0 \ol{b_1}).$$ 

We notice that we can write the expression:
$$q({\bf a})=s a_0 \ol{a_0} + \frac{t}{2} (a_1 \ol{a_0} + a_0 \ol{a_1})$$ 
as a Hermitian quadratic form in the complex normal random vector ${\bf a} = (a_0,a_1)$,
where:
$$q({\bf v})={\bf v}\, \underbrace{\left( \begin{array}{cr}
s & t/2 \\ t/2 & 0 
\end{array}\right)}_{Q} \overline {{\bf v}}^T.$$

Similarly, denoting by ${\bf b} = (b_0,b_1)$, we can write: 
\be s b_0 \ol{b_0} + \frac{t}{2} (b_1 \ol{b_0} + b_0 \ol{b_1})=-q({\bf b}).\ee

By independence, we have:
$$\EE e^{i(sX_1 + tX_2)} = \EE e^{iq({\bf a}) -iq({\bf b})} = \EE e^{iq({\bf a}) } \EE e^{-iq({\bf b})} .$$

Using \cite[Eq. 4(a)]{Turin} while taking $L =  \textrm{diag}(1,n)$, 
$\overline{V} = (0,0)$, and $t=1$ (while treating our variables $s$ and $t$ in $Q$ as parameters), we have:
\begin{align}
 \EE e^{iq({\bf a}) } \EE e^{-iq({\bf b})} &= \frac{1}{\det(\mathbbm{1} - iLQ)\det(\mathbbm{1} + iLQ)} \\
 &= \frac{1}{(1-is+nt^2/4)(1+is+nt^2/4)} \\
 &= \frac{1}{(1+nt^2/4)^2 + s^2}.
\end{align}
Combining this with \eqref{eq:inversion} while setting $x_1=0$, we have:
\begin{align}
\rho(0,x_2) &= \frac{1}{(2\pi)^2} \int_{-\infty}^\infty \int_{-\infty}^\infty \frac{e^{-itx_2}}{(1+n t^2/4)^2 + s^2} ds dt \\
&= \frac{1}{4\pi} \int_{-\infty}^\infty \frac{e^{-itx_2}}{(1+nt^2/4)} dt \\
&= \frac{1}{2 \sqrt{n}} e^{-2|x_2|/\sqrt{n}} ,\\
\end{align}
where we have performed the above integrations using residues.
Finally, the integral \eqref{eq:C}
is now elementary to compute:
\begin{align*}
C(n) = \int_{-\infty}^\infty |x_2| \rho(0,x_2) dx_2 &= \frac{1}{2\sqrt{n}}\int_{-\infty}^\infty |x_2| e^{-2|x_2|/\sqrt{n}} dx_2 \\
&= \frac{\sqrt{n}}{8}\int_{-\infty}^\infty \tau e^{-\tau} d\tau \\
&= \frac{\sqrt{n}}{4}\int_{0}^\infty \tau e^{-\tau} d\tau = \frac{\sqrt{n}}{4},\\
\end{align*}
and \eqref{eq:length} becomes $ \displaystyle \EE|\Gamma| = \frac{\pi^2}{2} \sqrt{n} ,$ as desired.
\end{proof}
%\begin{remark}[Geometry stabilization] We can also consider the curve:
%\be \Gamma(\lambda)=\left\{z\in \hat{\C}\quad\textrm{s.t.}\quad\left|\frac{p(z)}{q(z)}\right|=\lambda\right\}, \quad \lambda>0.\ee
%Reproducing the above computations, we can derive:
%\be \EE |\Gamma(\lambda)|=\frac{\pi^2 \lambda}{1+\lambda^2}\sqrt{n}.\ee
%If we let now $\lambda=\lambda(n)\sim \alpha n^{\pm \frac{1}{2}}$ we see that:
%\be \EE \left|\left\{\left|\frac{p(z)}{q(z)}\right|=\alpha n^{\pm \frac{1}{2}}\right\}\right|\sim \alpha \pi^2.\ee
%\end{remark}
\section{More Geometry: the average number of meridian tangents}\label{sec:meridian}
Fixing a point $z\in \hat{\C}$, say the north pole, consider the projection on the equator perpendicular to $z$:
\be h:\hat\C\backslash\{z, -1/\overline z\}\to S^1.\ee
If $(\theta, \phi)$ are the standard spherical coordinates, then we simply have:
\be\label{eq:h} h(\theta, \phi)=\theta.\ee
For every measurable subset $A\subset \hat{\C}$, we denote by $\nu(A)$ the number of critical points in $A$ of the restriction $h|_\Gamma$ (notice that with probability one $\Gamma$ doesn't pass through $z$ or its antipodal, 
where $h$ is not defined). The number $\nu$ measures the number of times $\Gamma$ is tangent to a meridian, naively the number of time $\Gamma$ ``changes direction''. Because of invariance under the orthogonal group, the expectation of $\nu$ doesn't depend on the point $z$.
\begin{thm}\label{thm:tangents}Let $A\subset \hat{\C}$ be a measurable set. Then:
\be \EE\nu(A)\sim |A|\frac{n(32-\sqrt{2})}{112 \pi}\ee
(here $|A|$ denotes the spherical measure of $A$).
In particular $\EE \nu(\hat{\C})\sim \frac{n(32-\sqrt{2})}{28}.$
\end{thm}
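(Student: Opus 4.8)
The plan is to adapt the two-step strategy used for Theorem~\ref{thm:length} to the system $\{f = \partial_x f = 0\}$, where $f = |p|^2 - |q|^2$. By the orthogonal invariance of the model (Lemma~\ref{lemma:invariance}) and the fact that $h$ has the form $h(\theta,\phi) = \theta$, the expected number of meridian tangents $\nu(A)$ in a measurable set $A$ is the integral over $A$ of a pointwise Kac-Rice density that is \emph{independent of the base point}. Hence $\EE \nu(A) = |A| \cdot k$, where $k$ is the Kac-Rice density evaluated at a single convenient point, say the origin $z=0$. This immediately yields the scaling $\EE\nu(A) \sim c\,n\,|A|$ provided we show $k \sim \frac{n(32-\sqrt2)}{112\pi}$, and specializing to $A = \hat\C$ with $|\hat\C| = 4\pi$ gives the stated constant $\frac{n(32-\sqrt2)}{28}$.

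The key reduction is Lemma~\ref{lemma:condi}: setting up the system $f = \partial_x f = 0$ and noting that a critical point of $\theta|_\Gamma$ occurs exactly where $\Gamma$ is tangent to the meridian direction, the Kac-Rice density becomes
\be
k = \int_{\R^2} |h_1 h_2|\, \rho(0,0,h_1,h_2)\, dh_1\, dh_2,
\ee
where $\rho$ is the joint density of the four real random variables
\be
(X,Y,H_1,H_2) = \bigl(f(0),\ \partial_x f(0),\ \partial_y f(0),\ \partial_x^2 f(0)\bigr).
\ee
Each of these is a \emph{Hermitian quadratic form} in the Gaussian coefficients $a_0,a_1,a_2,b_0,b_1,b_2$. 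The strategy, exactly parallel to the length computation, is to compute the joint characteristic function $\hat\rho(s,t,u_1,u_2) = \EE \exp\bigl(i(sX + tY + u_1 H_1 + u_2 H_2)\bigr)$ by writing the exponent as $q(\mathbf a) - q(\mathbf b)$ for a Hermitian form $q$ on $\C^3$, and then invoking the same determinantal formula \cite[Eq.~4(a)]{Turin} to obtain $\hat\rho = 1/\bigl(\det(\mathbbm 1 - iLQ)\det(\mathbbm 1 + iLQ)\bigr)$ with $L = \mathrm{diag}(1,n,n(n-1)/2\cdot\text{appropriate factors})$. Recovering $\rho(0,0,h_1,h_2)$ by Fourier inversion (residues in the $s,t$ variables) and then performing the $\int |h_1 h_2|$ integral produces the constant.

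The main obstacle will be the \emph{exact evaluation} of these integrals: unlike the length case, where $\hat\rho$ was a simple rational function of two variables and the inversion reduced to a single one-dimensional residue, here $\hat\rho$ is a rational function of four variables and the determinants $\det(\mathbbm 1 \pm iLQ)$ are considerably more complicated. The authors themselves flag this, noting that \emph{the method of characteristic functions is much more difficult to execute in the case of meridian tangents}. Concretely, I expect the hard work to lie in (i) correctly identifying the $6\times 6$ covariance structure and the matrix $Q$ encoding all four quadratic forms, (ii) carrying out the multivariate Fourier inversion — where one must integrate first in $s$ (the transform variable dual to $X$) and $t$ (dual to $Y$) by residues, leaving a two-dimensional integral in the $H_1,H_2$ directions — and (iii) executing the final weighted integral $\int |h_1 h_2|\rho\, dh_1 dh_2$, whose appearance of the irrational constant $\sqrt 2$ in the answer signals that a nontrivial algebraic simplification (presumably factoring a quartic or taking a square root of a discriminant) occurs along the way. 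The asymptotic ``$\sim$'' rather than an exact equality indicates that one retains only the leading-order-in-$n$ behavior of these integrals, which should simplify the residue and integration steps but still requires careful bookkeeping of which terms dominate as $n \to \infty$.
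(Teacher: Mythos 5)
Your outline follows the same strategy as the paper (invariance to reduce to a single Kac--Rice density, Lemma~\ref{lemma:condi} for the system $\{f=\partial_x f=0\}$, Turin's determinantal formula for the characteristic function, then Fourier inversion), but there is a concrete gap in the reduction step, and it changes the answer by a factor of $4$. In your identity $\EE\nu(A)=|A|\cdot k$ you take $|A|$ to be \emph{spherical} measure, while your $k$ is the Kac--Rice density of the planar system $\{f=\partial_x f=0\}$ at $z=0$, i.e.\ a density with respect to \emph{Lebesgue} measure $dx\,dy$. These two densities differ by the conformal factor of the stereographic projection at the origin: the spherical metric near $z=0$ is $4|dz|^2/(1+|z|^2)^2\approx 4|dz|^2$, so spherical area is four times planar area there, and the density per unit spherical area is one quarter of the planar Kac--Rice density. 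The paper's proof does exactly this bookkeeping: it sets up the system in spherical coordinates as $\{\tilde g=\partial_\phi\tilde g=0\}$ and converts to planar derivatives at the point corresponding to the origin via $\partial_\phi\tilde g=\tfrac12\partial_x g$, $\partial_\theta\tilde g=\tfrac12\partial_y g$, $\partial^2_\phi\tilde g=\tfrac14\partial^2_x g$, which produces the factor $\tfrac14$ in \eqref{eq:k1}, namely $k(\phi)=\tfrac14\sin\phi\cdot(*)$; the $\sin\phi$ is precisely the spherical Jacobian, so that $\EE\nu(A)=\tfrac{1}{4}(*)\,|A|$. Since the planar integral evaluates to $(*)\sim n\frac{32-\sqrt2}{28\pi}$, your normalization would yield $\EE\nu(\hat\C)\sim 4\pi\cdot(*)= \frac{n(32-\sqrt{2})}{7}$, four times the statement of the theorem. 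Carrying out your program verbatim would therefore ``disprove'' the theorem rather than prove it.

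Two smaller points. First, you assert that invariance makes the density independent of the base point; this is true for the density with respect to spherical measure, but it is not immediate, because the meridian field (and hence $h$) is invariant only under rotations about the polar axis, not under the full group. The paper proves it by an explicit rotation argument showing $(\tilde g,\partial_\phi\tilde g,\partial_\theta\tilde g,\partial^2_\phi\tilde g)(\theta,\phi)\sim(\tilde g,\partial_\phi\tilde g,\sin\phi\,\partial_\theta\tilde g,\partial^2_\phi\tilde g)(0,\pi/2)$, whence the coordinate density is $\propto\sin\phi$; relatedly, the planar condition $\partial_x f=0$ encodes meridian (radial) tangency only at points of the positive real axis, so some such argument is required before everything can be evaluated at one point. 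Second, your steps (i)--(iii) are a plan rather than an execution: after the $s$ and $t$ residue integrations the paper still needs Bessel-function identities for the $u$ and $h_1$ integrals, and a rescaling plus dominated-convergence argument to extract the leading asymptotics in $n$; the appearance of $\sqrt2$ comes from the roots of the limiting quartic $2+9v^2+4v^4=(v^2+2)(4v^2+1)$, not from a discriminant simplification at fixed $n$.
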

\begin{proof}
As in the proof of Theorem \ref{thm:length}, the proof is divided into two parts.
First we use invariance of the model
to reduce to a Kac-Rice integral, which we then compute in Section \ref{sec:Fouriermeridian}, using Fourier analysis.
\subsection{Reduction to a Kac-Rice integral}
Let $S^2$ be the unit sphere in $\R^3\sim \C\times \R$ and let us consider on it spherical coordinates $(\theta, \phi)$ centered at the point $(0,1, 0)$ (notice that these \emph{are not} the standard spherical coordinates, but with this choice the computations below simplify). Let also $\sigma:S^2\backslash \{(0,0,1)\} \to \R^2$ be the stereographic projection:
\be \label{eq:polar}\sigma(\theta, \phi)=(x(\theta, \phi), y(\theta, \phi)).\ee
Let $g(z)=g(x+iy)=|p(x+iy)/q(x+iy)|^2-1$ and set:
\be \tilde{g}(\theta, \phi)=g(\sigma(\theta, \phi))\ee
Then $\nu(A)$ equals the number of solutions in the region $R=\{(\theta, \phi)\,|\, \sigma(\theta, \phi)\in A\}$ of the system :
\be \left\{\begin{matrix}\tilde{g}=0\\ \partial_\phi \tilde{g}=0\end{matrix}\right.\ee
In particular, using the Kac-Rice formula as in Lemma \ref{lemma:condi}:
\be\label{eq:KRstart} \EE\nu(A)=\int_{R}k(\theta, \phi)d\phi d\theta ,\ee 
where $k$ is the Kac-Rice density:
\be k(\theta, \phi)=\int_{\R^2}|h_1h_2|\tilde\rho(0,0, h_1, h_2, \theta, \phi)dh_1 dh_2\ee
and $\tilde\rho$ is the joint density of the random vector $(\tilde g, \partial_\phi \tilde g, \partial_\theta \tilde g, \partial^2_{\theta}\tilde g)$ evaluated at $(\theta, \phi)$.
Using the invariance under rotation, we see that $\partial_\theta k=0$ and $k=k(\phi)$; moreover composing $\tilde g$ with a rotation of an angle $-\phi+\pi/2$ around the $y$-axis, we see that (as random variables):
\be (\tilde g, \partial_\phi \tilde g, \partial_\theta \tilde g, \partial^2_{\theta}\tilde g)(\theta, \phi)\sim (\tilde g, \partial_\phi \tilde g,\sin \phi \,\partial_\theta \tilde g, \partial^2_{\theta}\tilde g)(0, \pi/2).\ee
In particular, still denoting by $\tilde\rho$ the joint density of $(\tilde g, \partial_\phi \tilde g,\partial_\theta \tilde g, \partial^2_{\theta}\tilde g)(0, \pi/2)$:
\be\label{eq:K-R} k(\phi)=\sin\phi\cdot \int_{\R^2}|h_1h_2|\tilde\rho(0,0, h_1, h_2)dh_1 dh_2.\ee
Recalling our definition of $\tilde g$, we have:
\begin{align}\label{eq:coord1} 
\tilde{g}(0, \pi/2)=g(0),\quad \partial_\phi\tilde g(0,\pi/2)=\partial_xg(0)\partial_\phi x(0,\pi/2)\quad \partial_\theta\tilde g(0,\pi/2)=\partial_yg(0)\partial_\theta x(0,\pi/2)
\end{align}
\begin{align}\label{eq:coord2}
(\partial^2_\phi \tilde g)(0,\pi/2)=\partial^2_xg(0)(\partial_\phi x(0,\pi/2))^2+\partial_xg(0)\partial^2_{\phi}x(0,\pi/2).
\end{align}
In the chosen coordinates we have:
\be x(0,\phi)=\tan\left(\frac{\phi}{2}-\frac{\pi}{4}\right)\quad \textrm{and}\quad y\left(\theta, \frac{\pi}{2}\right)=\tan \left(\frac{\theta}{2}\right),\ee
which substituted into \eqref{eq:coord1} and \eqref{eq:coord2} gives:
\begin{align} \partial_\phi\tilde g(0,\pi/2)=\frac{1}{2}\partial_xg(0)\quad \partial_\theta\tilde g(0,\pi/2)=\frac{1}{2}\partial_yg(0), \quad(\partial^2_\phi \tilde g)(0,\pi/2)=\frac{1}{4}\partial^2_xg(0). \end{align}
Using these equations into \eqref{eq:K-R}, we can write:
\be\label{eq:k1} k(\phi)=\frac{1}{4}\sin\phi\cdot \int_{\R^2}|h_1h_2|\rho(0,0, h_1, h_2)dh_1 dh_2\ee
where now $\rho$ denotes the joint density of $(g(0), \partial_xg(0), \partial_yg(0),\partial^2_{x}g(0)).$

\subsection{Computation of the Kac-Rice integral}\label{sec:Fouriermeridian}
Notice now that the integral in \eqref{eq:k1} equals the Kac-Rice density at zero for the system of random equations:
\be 
\left\{\begin{matrix}g(x,y)=0\\ \partial_x g(x,y)=0\end{matrix}\right.
\ee
Denoting by $f=|p|^2-|q|^2$, with probability one, the above system has the same solutions of the random system:
\be\label{eq:system1} \left\{\begin{matrix}f=0\\\partial_xf=0\end{matrix}\right.\ee
Hence their Kac-Rice densities at zero coincide, and:
\be\label{eq:KRfinal} (*):= \int_{\R^2}|h_1h_2|\rho(0,0, h_1, h_2)dh_1 dh_2=\int_{\R^2}|h_1h_2|\tilde\rho(0,0, h_1, h_2)dh_1 dh_2 ,\ee
where $\tilde \rho$ is the joint density of the random variables 
$$(X_1,X_2, H_1, H_2)=(f(0),\partial_xf(0), \partial_yf(0), \partial^2_{x}f(0)):$$
\be f(0)=a_0\overline a_0-b_0\overline b_0, \quad \partial_xf(0)=a_1\overline a_0+a_0\overline a_1-b_1\overline b_0-b_0\overline b_1, \quad \partial_yf(0)=i(a_1\overline a_0-a_0\overline a_1-b_1\overline b_0+b_0\overline b_1)\ee
\be \partial_x^2f(0)=a_2\overline a_0+2a_1\overline a_1+a_2\overline a_0-b_2\overline b_0-2b_1\overline b_1-b_0\overline b_2\ee
(we have used the identities $\partial_x=\partial_z+\partial_{\overline{z}}$ and $\partial_y=i(\partial_z-\partial_{\overline z})$ to compute the derivatives of $f$).

We proceed now in a similar way as in the proof of Theorem \ref{thm:length}, keeping the notation close to it in order to stress analogies. Denoting by ${\bf a}=(a_0, a_1, a_2)$ and ${\bf b}=(b_0, b_1, b_2)$, we can write:
\be sX_1+tX_2+uH_1+wH_2=q({\bf a})-q({\bf b})\ee
where now $q$ is the Hermitian form:
\be\label{eq:Q3} q({\bf v})={\bf v}\underbrace{\left(\begin{array}{ccc}s & t-iu & w \\t+ iu & 2w & 0 \\w & 0 & 0\end{array}\right)}_{Q} {\overline{\bf v}}^T.\ee
Using the Fourier inversion formula, we can write:
\begin{align}
(*)&=\int_{\R^2}|h_1h_2|\frac{1}{(2\pi)^4}\int_{\R^4}e^{-i(h_1u+h_2 w)}\EE e^{i q({\bf a})-iq({\bf b})}ds\, dt\, du\, dw\,dh_1\,dh_2\\
&=\frac{1}{(2\pi)^4}\int_{\R^2}|h_1h_2|\int_{\R^4}e^{-i(h_1u+h_2 w)}\EE e^{i q({\bf a})}\EE e^{-iq({\bf b})}ds\, dt\, du\, dw\,dh_1\,dh_2\\
\label{eq:explicit}&=\frac{1}{(2\pi)^4}\int_{\R^2}|h_1h_2| \int_{\R^4}e^{-i(h_1u+h_2 w)}F^+_n(s,t,u, w)F_n^-(s,t,u,w)ds\, dt\, du\, dw\,dh_1\,dh_2.
\end{align}
For the explicit computation of $F_n^{\pm}$ we can again use 
\cite[Eq. 4(a)]{Turin} in this case with $Q$ given by \eqref{eq:Q3} and
$L = \textrm{diag}(1, n, {n \choose 2})$:
\be F_n^{\pm}(s,t,u,w)=\frac{1}{1+nt^2+nu^2-2nsw-nw^2/2-n^2w^2/2\pm i(n^2w^3-n^3w^3-2nw-s)}.\ee
Notice that we can rewrite $F_n^+F_n^-$ as:
\be F_n^+F_n^-=\frac{1}{ \left(1+n t^2+n u^2-2 n s w-n w^2/2+n^2 w^2/2\right)^2+\left( n^2 w^3- n^3 w^3-2n w-s\right)^2}.\ee
Performing the $s$ integration using residues, we obtain:
\be (*)= \frac{1}{(2\pi)^4} \int_{\R^2}|h_1h_2|\int_{\R^3}e^{-i(h_1u+h_2 w)} F_{2, n}(t,u, w) dt\, du\, dw\,dh_1\,dh_2\ee
where:
\be F_{2,n}(t, u,w)=\frac{2\pi}{ 2+2 n \left(t^2+u^2\right)+n (9 n-1) w^2+4 (n-1) n^3 w^4}.\ee
Performing as well the $t$ integration using residues:
\be (*)= \frac{1}{(2\pi)^4} \int_{\R^2}|h_1h_2| \frac{\pi^2}{n^2}\int_{\R^2}e^{-i(h_1u+h_2 w)} F_{3, n}(u, w)  du\, dw\,dh_1\,dh_2\ee
where now:
\be F_{3,n}(u, w)=\frac{1}{\sqrt{u^2+1/n + (9 n-1) w^2/2+2 (n-1) n^2 w^4}}.\ee
Since $F_{3, n}(u, w)$ is an even function of $u$ we have:
 \begin{align}(*)&= \frac{1}{16 \pi^2 n^2} \int_{\R^2}|h_1h_2|\int_{\R^2}e^{-i(h_1u+h_2 w)} F_{3, n}(u, w)  du\, dw\,dh_1\,dh_2\\
 &= \frac{1}{8 \pi^2 n^2} \int_{\R^2}|h_1 h_2|  \int_{\R}e^{-ih_2 w} \int_{0}^\infty \cos(uh_1) F_{3, n}(u, w)  du\, dw\,dh_1\,dh_2.\end{align}
 For the $u$ integration, we use the identity \cite[Sec. 3.754, \# 2.]{Grad}:
 $$ \int_0^\infty \frac{\cos (ax)}{\sqrt{\beta^2+x^2}} dx = K_0(a \beta) ,$$
 where $K_0$ is the zeroth-order modified Bessel function of the second kind. 
 This results in:
 \be (*)=  \frac{1}{8 \pi^2 n^2} \int_{\R^2}|h_1 h_2| \int_{\R}e^{-ih_2 w}K_0\left(\frac{|h_1|}{F_{3,n}(0, w)}\right)dw\,dh_1\,dh_2.\ee
Using Fubini's theorem and noticing that the integrand is an even function of $h_1$, we have:
  \begin{align}(*)&=   \frac{1}{8 \pi^2 n^2}\int_{\R^2}|h_2| e^{-ih_2 w} \int_{\R} |h_1| K_0\left(\frac{|h_1|}{F_{3,n}(0, w)}\right)dh_1 \,dw \,dh_2\\
 &=   \frac{1}{4 \pi^2 n^2}\int_{\R^2} |h_2| e^{-ih_2 w} \int_{0}^{\infty} h_1 K_0\left(\frac{h_1}{F_{3,n}(0, w)}\right)dh_1 \,dw \,dh_2.\end{align}
 Using \cite[Sec. 6.561, \#16.]{Grad} to evaluate the inside integral, we have:
 \be (*)= \frac{1}{4 \pi^2} \int_\R|h_2| \int_\R e^{-ih_2 w}F_{4,n}(w)dw dh_2, \ee
 where:
 \be F_{4,n}(w)=\frac{2 }{2+n w^2 \left(-1+n \left(9+4 (-1+n) n w^2\right)\right)}.\ee
 The change of variables $v=n w$, $h_2=\tau n$ gives:
 \begin{align} (*)&= \frac{1}{4 \pi^2} \int_\R|h_2| \int_\R e^{-ih_2 \frac{v}{n}}F_{4,n}(v/n)\frac{dv}{n} \,dh_2 \\
 &= \frac{n}{4 \pi^2}\int_\R|\tau| \int_\R e^{-i\tau v}F_{4,n}(v/n)dv\,d\tau.\end{align} 
 Applying Lebesgue's dominated convergence theorem to the sequence $F_{4,n}(v/n)$, we see that:
 \be (*)\sim \frac{n}{4 \pi^2} \int_\R|\tau|\int_{\R}e^{-iv \tau}\frac{2}{2+9v^2+4v^4}dv\,dw.\ee
 Since the integrand is even in $v$, we have:
 \begin{align} (*)&\sim \frac{n}{4 \pi^2} \int_\R|\tau| \int_{\R}\cos(\tau v)\frac{2}{2+9v^2+4v^4}dv\,d\tau\\
 &= \frac{n}{4 \pi^2}  \int_\R|\tau|\int_{0}^\infty \frac{\cos(\tau v)}{(v^2+2)(v^2+1/4)}dv\,d\tau \\
(\text{by \cite[Sec. 3.728, \#1.]{Grad}}) \quad &= \frac{n}{4 \pi^2} \int_\R|\tau | \frac{ \pi}{7} \left(4 e^{-\frac{|\tau |}{2}}-\sqrt{2} e^{-\sqrt{2} |\tau|}\right) d\tau\\
   &=\frac{n}{14\pi} \int_0^\infty \tau\left(4 e^{-\frac{\tau}{2}}-\sqrt{2} e^{-\sqrt{2} \tau}\right) d\tau\\
 \label{eq:asymp}&=n \frac{32-\sqrt{2}}{28 \pi}.
 \end{align}
\subsection{End of the proof}We can now substitute the asymptotic \eqref{eq:asymp} into  \eqref{eq:k1} and obtain:
\be k(\phi)\sim n \left( \frac{32-\sqrt{2}}{112 \pi}\right)\sin\phi\ee
which substituted in \eqref{eq:KRstart} finally gives:
\begin{align}  \EE\nu(A)&\sim n \left( \frac{32-\sqrt{2}}{112 \pi}\right)\int_{R}\sin \phi \,d\phi\, d\theta, \quad R=\{(\theta, \phi)\,|\, \sigma(\theta, \phi)\in A\} \\
&= n \left( \frac{32-\sqrt{2}}{112 \pi}\right)|A|.\end{align}
\end{proof}

\section{Topology}\label{sec:topology}

\subsection{Arrangements and the nesting graph of a curve on the sphere}
Given two manifold pairs $(A_1, B_1)$ and $(A_2, B_2)$, where each $A_i$ is a submanifold of $B_i$, we say that they are isotopic if there exists a homeomorphism $\psi:B_1\to B_2$ such that $\psi$ restricts to a homeomorphism $\psi|_{A_1}:A_1\to A_2;$ in this case we write:
\be (B_1, A_1)\sim (B_2, A_2).\ee
 In the case each $A_i$ is a real curve (possibly with many components) in a region $B_i\subset \R^2$, 
 the notion of isotopy for manifold pairs is stronger than just $B_1$ being homeomorphic to $B_2$ and $A_1$ to $ A_2$, because it depends on the way each $A_i$ is embedded in $B_i$. The isotopy class of a manifold pair $(A, B)$ is usually called an \emph{arrangement} of $A$ in $B$.
 
In the case of a curve $C$ on the sphere $S^2$, the arrangement $(S^2, C)$ is captured by the \emph{nesting graph} $T(S^2, C)$, 
built in the following way. Vertices of the graph are the components of $S^2\backslash C$
(by Alexander's duality \cite[Theorem 3.44]{Hatcher} the number of vertices is $b_0(C)+1$), and one edge runs between two components if and only if they have a (unique) common boundary. 
Since every component of $C$ separates $S^2$ into two open contractible sets, the graph $T(S^2, C)$ is indeed a tree. 

In the case of a curve $C$ in the plane $\R^2$ the nesting tree becomes a \emph{rooted} tree (the root is the unique unbounded component of $\R^2\backslash C$, see figure \ref{fig:tree}). Stereographic projection $\sigma:S^2\backslash\{p\}\to \R^2$ from one point $p\notin C$ induces an isomorphism of graphs $T(S^2, C)\simeq T( \R^2, \sigma(C))$ (the root of $T(S^2, C)$ is the component containing $p$).

It is not difficult to prove (and we will use this fact in the sequel) that the isomorphism class of trees (respectively of rooted trees) characterizes the isotopy class $(S^2, C)$ (respectively $(\R^2, C)$).
\begin{figure}
\begin{center}

\includegraphics[width=0.7\textwidth]{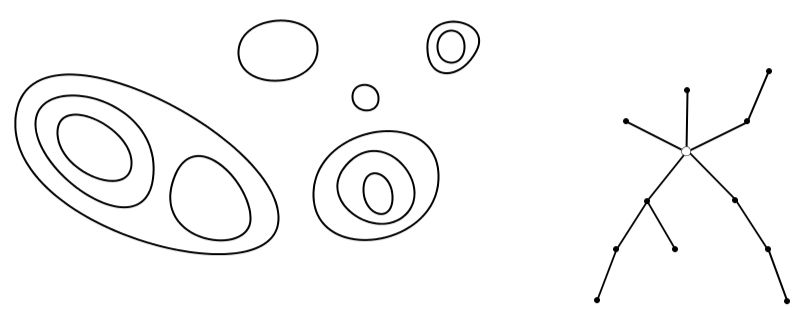}
\caption{An arrangement of curves in $\R^2$ and the associated rooted tree.}
\label{fig:tree}
\end{center}
\end{figure}
\subsection{Arrangements of rational lemniscates}\label{sec:H16} Notice that the lemniscate $\{|p/q|=1\}$ can also be seen as the preimage of the unit circle $S^{1}\subset \hat{\C}$ under the holomorphic map $p/q=r:\hat{\C}\to \hat{\C}$:
\be\Gamma=\left\{\left|\frac{p(z)}{q(z)}\right|=1\right\}=r^{-1}(S^1).\ee
We will say that the lemniscate is \emph{nondegenerate} if the map $r$ is transversal to $S^1$ (in other words, zero is a regular value of $z\mapsto p(z)\overline{p(z)}-q(z)\overline{q(z)}$).
This implies that $\Gamma$ consists of smooth components.
The following upper bound on the number of components is well-known.
\begin{prop}\label{prop:components}
Let $\Gamma\subset \hat\C$ be a nondegenerate rational lemniscate of degree $n$. 
Then $\Gamma$ has at most $n$ components.
\end{prop}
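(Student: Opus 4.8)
The plan is to show that $\Gamma = r^{-1}(S^1)$ consists of at most $n$ components by analyzing how the circle $S^1 \subset \hat\C$ pulls back under the degree-$n$ holomorphic map $r = p/q : \hat\C \to \hat\C$. Since the lemniscate is assumed nondegenerate, $r$ is transversal to $S^1$, so $\Gamma$ is a disjoint union of finitely many smooth circles embedded in $\hat\C$, and each is an unbranched cover of $S^1$ under $r$. The key idea is that $r$ restricted to $\Gamma$ is a degree-$n$ covering map onto $S^1$, and I would count components by a degree/Euler-characteristic bookkeeping argument.

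First I would set up the two open regions determined by $\Gamma$: the \emph{interior} $\Omega_- = r^{-1}(\{|w| < 1\})$ and the \emph{exterior} $\Omega_+ = r^{-1}(\{|w| > 1\})$, which together with $\Gamma$ partition $\hat\C$. The map $r$ sends $\Omega_-$ onto the open unit disk $\{|w|<1\}$ and $\Omega_+$ onto $\{|w|>1\} \subset \hat\C$, each as a branched cover of degree $n$. The zeros of $r$ (the $n$ zeros of $p$) lie in $\Omega_-$ and the poles (the $n$ zeros of $q$) lie in $\Omega_+$. The plan is to use the Riemann--Hurwitz formula on each piece. Restricting $r$ to the closed region $\overline{\Omega_-}$, which is a (possibly disconnected) surface with boundary $\Gamma$, mapping as a branched cover of degree $n$ onto the closed disk, I would relate the Euler characteristics: $\chi(\overline{\Omega_-}) = n \cdot \chi(\overline{\mathbb{D}}) - B = n - B$, where $B \geq 0$ counts interior branch points with multiplicity. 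Since $\overline{\Omega_-}$ is a disjoint union of surfaces each with at least one boundary circle, and $\Gamma$ has $b_0(\Gamma)$ boundary circles total, a genus-nonnegativity estimate forces $b_0(\Gamma) \leq \chi(\overline{\Omega_-}) + (\text{correction})$.

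More cleanly, I expect the sharpest route is to count via the covering degree on $\Gamma$ itself. Each component $\gamma_i$ of $\Gamma$ maps to $S^1$ as a covering of some positive degree $d_i \geq 1$, and since the total covering degree of $r|_\Gamma$ over a regular value equals $n$ (the number of preimages of a generic point of $S^1$, counted with the local degree of $r$, which is $n$ by the fundamental theorem of algebra applied to $p(z) - w\,q(z)$), we get $\sum_i d_i = n$. Because each $d_i \geq 1$, this immediately yields $b_0(\Gamma) = \#\{i\} \leq \sum_i d_i = n$. The main obstacle, and the point requiring care, is justifying that the fiber $r^{-1}(\zeta)$ for a regular value $\zeta \in S^1$ has exactly $n$ points and that each contributes $+1$ to the relevant degree count without cancellation — this is where nondegeneracy (transversality, hence $\zeta$ a regular value) and the orientation-preserving nature of the holomorphic map $r$ are essential, ensuring all local degrees are positive and sum to the topological degree $n$. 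I would make this rigorous by noting $r^{-1}(\zeta)$ is the solution set of $p(z) = \zeta\, q(z)$, a polynomial equation of degree $n$ with $n$ simple roots for generic $\zeta$, and that these $n$ points distribute among the components of $\Gamma$ with at least one per component.
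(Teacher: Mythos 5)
Your argument is correct and coincides with the paper's own (alternative) proof: the paper likewise observes that $r$ is a degree-$n$ branched cover, that transversality makes $r|_{C_k}:C_k\to S^1$ a covering of degree $n_k$ for each component $C_k$, and that counting the fiber over a point of $S^1$ gives $\sum_k |n_k| = n$, hence at most $n$ components. Your preliminary Riemann--Hurwitz sketch is unnecessary, but the covering-degree count you settle on is exactly the paper's argument and is complete.
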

\begin{proof}
The lemniscate $\Gamma$ is the level set of the logarithmic potential 
$\psi(z) := \log |p(z)| - \log |q(z)|$.
By the maximum principle, each component of $\Omega :=\{ z \in \hat \C : \psi(z) < 0 \}$ contains a zero of $p(z)$,
since $\psi$ is a non-constant function vanishing on the boundary
and harmonic except at the zeros of $p$.
Each component of $\Gamma$ is a boundary component of some component of $\Omega$.
Thus, the number of components of $\Gamma$ is bounded by the number of components of $\Omega$.

Alternative proof: The rational function $r:\hat\C\to \hat\C$ defining $\Gamma=r^{-1}(S^1)$ is a degree-$n$ branched covering of the sphere, 
and hence for all $w\in \hat\C$ except finitely many points (the critical values of $r$) we have $ \#r^{-1}(w)=n$.

%Since $r$ is holomorphic, for every point $z\in \hat{\C}$ the differential $d_zr:T_z\hat\C\to T_{r(z)}\hat \C$ has rank zero or two;  
%by assumption $r$ is transversal to $\Gamma$, hence at every point $z\in \Gamma$ the differential $d_zr$ has maximal rank. 
%Consequently, if $C_1, \ldots, C_m$ are the components of $\Gamma$, then for every $k=1, \ldots, m$:

Since $r$ is transversal to $S^1$, if $C_1, \ldots, C_m$ are the components of $\Gamma$, then for every $k=1, \ldots, m$
the restriction:
\be r|_{C_k}:C_k\to S^1\ee
is a degree $n_k$ covering of the circle.  
Thus, if $w\in S^1$ we have:
\be \#r^{-1}(w)=|n_1|+\cdots+|n_m|=n,\ee
which implies $m\leq n.$
\end{proof} 

We will show that any arrangement of $n$ many circles in $\hat\C$ is isotopic to $(\Gamma, \hat{\C})$ for some rational lemniscate $\Gamma$ of degree $n$.
This type of result---that ``what is possible topologically is possible rationally'' has been observed in related problems,
such as the inverse image of the real line under a rational function with real coefficients \cite{BorceaShapiro}
(these results also resemble one for harmonic polynomial zero sets proved in \cite[Thm. 1.3]{EremJakNad}).
The present case is certainly known among experts, 
but we were unable to find a reference specifically adapted to our purposes,
so we provide a proof here.
We shall need the following lemma.

\begin{lemma}\label{lemma:build}
Let $\Gamma\subset \hat\C$ be a nondegenerate rational lemniscate of degree $n$ and 
$C\sim S^1\subset \hat{\C}$ be a  circle
such that $\Gamma$ is contained in the region exterior to $C$. 
Then there exists a nondegenerate rational lemniscate $\Gamma'$ of degree $n+1$ such that:
\be\label{eq:ISO} (\hat\C, \Gamma')\sim (\hat\C, \Gamma \cup C).\ee
\end{lemma}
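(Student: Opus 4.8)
The plan is to produce $\Gamma'$ by multiplying the defining rational function $r=p/q$ by a degree-one Blaschke-type factor supported near a single interior point of the disk bounded by $C$, thereby raising the degree to $n+1$ while creating exactly one small new oval inside $C$ and leaving the rest of $\Gamma$ unchanged up to isotopy.

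First I would normalize. Since $\{|p/q|=1\}=\{|q/p|=1\}$, after possibly swapping $p$ and $q$ I may assume $|r|<1$ on the closed disk $\ol D$ bounded by $C$ (the hypothesis that $\Gamma$ is exterior to $C$ guarantees that $\ol D$ lies in one of the two regions $\{|r|\lessgtr 1\}$). By compactness there is $\delta>0$ with $|r|\le 1-\delta$ on $\ol D$. Fix an interior point $b_0\in D$ far from $C$, and for small $\e>0$ set $a=b_0+\e$, $b=b_0$, and
$$ r_\e(z)=r(z)\,\frac{z-a}{z-b}=\frac{p(z)(z-a)}{q(z)(z-b)}. $$
This is a rational function of degree $n+1$, so $\Gamma_\e:=\{|r_\e|=1\}$ is a degree-$(n+1)$ lemniscate, and the factor $\tfrac{z-a}{z-b}$ tends to $1$ in the $C^1$ topology, uniformly on $\hat\C$ away from any fixed neighborhood of $b_0$, as $\e\to 0$.

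Next I would analyze $\Gamma_\e$ in two regions. On a neighborhood of $\Gamma$ (which is contained in $\hat\C\setminus\ol D$, hence away from $b_0$) we have $r_\e\to r$ in $C^1$; since $\Gamma$ is a compact regular (i.e.\ nondegenerate) level set of $|r|^2-1$, structural stability of regular level sets under $C^1$-small perturbations yields, for all small $\e$, an isotopy of $\hat\C$ carrying this outer part of $\Gamma_\e$ onto $\Gamma$. Inside $D$, where $|r|$ is nearly the constant $c_0=|r(b_0)|\le 1-\delta$, the equation $|r_\e|=1$ reads $|z-a|/|z-b|=1/|r(z)|$ with right-hand side bounded above $1$ and close to $1/c_0$; this is a small perturbation of an Apollonius circle of radius $O(\e)$ encircling the pole $b$, so $\{|r_\e|=1\}\cap D$ is a single smooth oval $C'$ bounding a tiny disk $D'\ni b$, with $|r_\e|>1$ inside $D'$ and $|r_\e|<1$ on the rest of $D$ (in particular on $\partial D=C$, since there $|r|\le 1-\delta$). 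The zero $a$ automatically falls in the region $\{|r_\e|<1\}$, so no further component arises, and a generic choice of the parameters keeps $r_\e$ transversal to $S^1$, so $\Gamma_\e$ is nondegenerate.

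Finally I would assemble the arrangement. The two steps above show that, up to isotopy, $\Gamma_\e$ is the union of a copy of $\Gamma$ with one extra oval $C'$ lying in the component of $\hat\C\setminus\Gamma$ that contains $D$ and bounding a disk $D'$ disjoint from all other components. Its nesting tree is therefore the nesting tree of $\Gamma$ with a single new leaf attached at the vertex corresponding to that component --- which is exactly the nesting tree of $\Gamma\cup C$. By the characterization of arrangements on $S^2$ by their nesting trees recalled above, $(\hat\C,\Gamma_\e)\sim(\hat\C,\Gamma\cup C)$, so $\Gamma':=\Gamma_\e$ satisfies \eqref{eq:ISO}. I expect the structural-stability step --- rigorously controlling the global level set $\Gamma_\e$ under the perturbation so as to guarantee that exactly one new component appears while the rest stays isotopic to $\Gamma$ --- to be the main obstacle, handled by combining compactness, the nondegeneracy of $\Gamma$, and the $C^1$-convergence $r_\e\to r$ away from the dipole.
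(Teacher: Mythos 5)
Your proposal follows the same basic strategy as the paper's proof: introduce a pole inside $C$ as a small perturbation of $r$, use $C^1$-stability (Thom's isotopy lemma) to preserve the arrangement of $\Gamma$ in the outer region, and show that the perturbation creates exactly one small nondegenerate oval around the new pole. The paper perturbs additively, taking $r_\e(z)=r(z)+\e/z$ with the pole at a point inside $C$, while you perturb multiplicatively by a zero--pole dipole $(z-a)/(z-b)$. One of your supporting arguments genuinely differs and is arguably cleaner: for uniqueness of the new oval the paper counts components (a degree-$(n+1)$ lemniscate has at most $n+1$ ovals, of which $n$ are already accounted for outside), which implicitly uses $b_0(\Gamma)=n$, whereas your rescaled Apollonius-circle analysis at scale $\e$ is self-contained and makes no such assumption.

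However, two steps need repair. First, your multiplicative choice has a cost that the additive one avoids: $r_\e-r=-\e\, r(z)/(z-b)$ blows up at the poles of $r$, so $r_\e$ is \emph{not} $C^1$-close to $r$ on all of $\hat\C$ minus a neighborhood of $b_0$, only on compact sets avoiding the poles of $r$. Structural stability applied on a neighborhood $N$ of $\Gamma$ gives $\Gamma_\e\cap N\sim\Gamma$, but it does not by itself exclude components of $\Gamma_\e$ appearing in the outer region away from $N$; you must add the (easy) observation that near poles of $r$ one has $|r_\e|$ large, near zeros of $r$ one has $|r_\e|$ small, and on the remaining compact set $|r|$ is bounded away from $1$ while $r_\e\to r$ uniformly. (The paper's $r+\e/z$ sidesteps this entirely: in the chart at infinity the difference is $\e w$, globally $C^1$-small on the outer disk, so Thom's lemma applies there in one step.) Second, the nondegeneracy of $\Gamma_\e$ cannot be dispatched by ``a generic choice of the parameters'': with the single parameter $\e$, parametric transversality is not automatic (at a critical point of $r_\e$ lying on the level set, the $\e$-derivative of $r_\e$ may be tangent to $S^1$), so as stated this is an unsupported assertion. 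The simplest fix is the paper's explicit estimate, which adapts to your setting: $r_\e'=r'\frac{z-a}{z-b}+\e\frac{r}{(z-b)^2}$, and on the new oval $|z-b|=O(\e)$, so the second term has modulus of order $1/\e$ while the first stays bounded, whence $r_\e'\neq 0$ there; transversality in the outer region then follows from $C^1$-closeness near $\Gamma$. You should also require $r(b_0)\neq 0$ when choosing $b_0$, since otherwise the Apollonius picture (and the estimate above) degenerates.
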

\begin{proof}Consider thus $p,q$ such that $r=p/q:\hat\C\to \hat\C$ is transversal to $S^1$ and:
\be\Gamma=r^{-1}(S^1).\ee
Let $z_0$ be a point interior to $C$.
It suffices to introduce a small component around $z_0$ disjoint from $\Gamma$.
We will accomplish this by perturbing $r$ with a simple pole.
By possibly composing with a rotation we may assume that $z_0=0$,
and by possibly exchanging the role of $p$ and $q$ we may also assume that $|p(0)/q(0)|<1$. 
Consider the family of lemniscates:
\be \Gamma_\e=\left\{\left|\frac{p(z)}{q(z)}+\frac{\e}{z}\right|=1\right\}, \quad \e \geq 0 .\ee
Let also $D=D(0, \delta)$ be a small enough closed disk such that $D\cap \Gamma=\emptyset$  (in particular $\left| r(z) \right|<1$ for $|z| = \delta$). 
We claim that for $\e>0$ small enough, in the complement of $D$ the lemniscate $\Gamma_\e$ has the same arrangement as $\Gamma$: 
\be (\hat{\C}\backslash D, \Gamma_\e\cap \hat{\C}\backslash D )\sim (  \hat{\C}\backslash D, \Gamma\cap  \hat{\C}\backslash D).\ee 
Notice that $\hat{\C}\backslash D$ is itself a disk in the chart at infinity; 
on this chart the rational function $r_\e$ defining $\Gamma_\e=r_\e^{-1}(S^1)$ is given by:
\be\label{eq:repsilon} r_\e(w)=\frac{~^h\!p(1, w)}{~^h\! q(1, w)}+\e w\ee
(the definition of $~^h\!p$ and $~^h\!q$ was given in \eqref{eq:homogenization}).

It follows from \eqref{eq:repsilon} that on the disk $\hat{D}=\{|w|\leq 1/\delta\}$ the $C^1$-norm of the difference $r_\e-r$ is a continuous function of $\e$ and it is zero for $\e=0$. 
Consequently $\|r_\e-r\|_{C^1(\hat D, \C)}$ can be made arbitrarily small and, since $r$ is transversal to $S^1$, for $\e>0$ small enough $r_\e$ is also transversal to it. 
Moreover for such $\e$ the maps $r|_{\hat{D}}$ and $r_\e|_{\hat D}$ are homotopic, and by Thom's Isotopy Lemma \cite{mat} (see also \cite[Section 1.5]{GorMacph}):
\be\label{eq:iso3} ( \hat{D}, \Gamma_\e\cap \hat{D})\sim (\hat{D}, \Gamma\cap \hat D).\ee
Finally in the disk $D$ there must be exactly one component of $\Gamma_\e$. 

Such component must exist because $|r_\e(0)|=\infty$ and for $\e>0$ small enough we also have $|r_\e(z)|<1$ for all $|z|=\delta$.
Moreover, since $\Gamma_\e$ has degree $n+1$, it has at most $n+1$ components (by Proposition \ref{prop:components}), $n$ of which are already in $\hat{D}$. 
Hence, $(D, \Gamma_\e\cap D)\sim (D, S^1)$, which together with \eqref{eq:iso3} implies \eqref{eq:ISO} (the trees $T(\hat\C, \Gamma_\e)$ and $T(\hat\C, \Gamma\cup C)$ are isomorphic).

For the nondegeneracy of $\Gamma_\e$, we have already chosen $\e$ small enough so that $r_\e|_{\hat{D}}$ is transversal to $S^1$, 
and it remains to show that we can take $\e>0$ (possibly even smaller) in order to have $r_\e|_D$ also transversal to $S^1.$ 
It suffices to show that the holomorphic derivative $r_\e'(z)$ does not vanish on the component of $\Gamma_\e$ contained in $D$.
Since $r$ is holomorphic in a neighborhood of $D$, then $|r'|<c_1$ on $D$, for some $c_1 < \infty$. 
By assumption $|r(z)|<c_0<1$ on the disk $D$,
and in order for $z$ to solve $|r(z)+\e/z|=1$ we must have $|z|<\frac{\e}{1-c_0}$.
Indeed, this follows from:
\be 1=\left|r(z)+\frac{\e}{z}\right|< |r(z)|+\frac{\e}{|z|} < c_0+\frac{\e}{|z|}.\ee
Since $c_0, c_1$ are fixed, we can choose $\e>0$ small enough so that:
\be \frac{\e}{1-c_0}<\left(\frac{\e}{c_1}\right)^{1/2}.\ee
Thus, on the component of $\Gamma_\e$ in the disk $D$ we have:
$$|z|^2 < \left(\frac{\e}{1-c_0}\right)^2 < \frac{\e}{c_1},$$
and in particular $\frac{\e}{|z|^2} > c_1$. This implies that $r_\e'(z) \neq 0$, since
\be |r_\e'(z)| = \left| r'(z)-\frac{\e}{z^2} \right| \geq \frac{\e}{|z|^2} - |r'(z)| \geq \frac{\e}{|z|^2} - c_1 .\ee
Hence $r_\e$ is transversal to $S^1$.

%To this end consider the map $R:[0,1]\times D\to \hat{\C}$ defined by $R(\e, z)=r_\e(z).$ The differential of $R$ writes: \be d_{(\e, z)}R(\dot \e, \dot z)=d_zr_\e\dot z+\frac{1}{z}\dot \e.\ee in particular the image of $d_{(\e, z)}$ spans at least the line 
\end{proof}
\begin{prop}\label{prop:H16}
Let $A\subset \hat{\C}$ be  a curve consisting of 
a disjoint union of $n$ many circles. 
Then there exists a nondegenerate lemniscate $\Gamma$ of degree $n$ such that:
\be (\hat\C, \Gamma)\sim (\hat{\C}, A).\ee
\end{prop}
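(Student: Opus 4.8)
The plan is to argue by induction on $n$, the number of circles, using Lemma \ref{lemma:build} as the inductive step and the nesting tree as the organizing device. Recall from the preceding discussion that the isotopy class of an arrangement on the sphere is completely recorded by its nesting tree $T(\hat\C, A)$, so it suffices to produce, for each prescribed nesting tree, a nondegenerate lemniscate realizing it. The key translation is that the move carried out by Lemma \ref{lemma:build} — inserting a small circle $C$ around an interior point $z_0$ of a complementary region of an existing lemniscate $\Gamma$, disjoint from $\Gamma$ — corresponds, at the level of nesting trees, exactly to attaching a new pendant \emph{leaf} at the vertex of $T(\hat\C, \Gamma)$ representing the region that contains $z_0$. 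Indeed, the small disk bounded by the new circle becomes a region with a single boundary component (a leaf), while every other region and every adjacency is left unchanged.

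For the base case I would take $n=1$: the unique arrangement of a single circle is realized by $r(z)=z$, whose lemniscate is the unit circle (one may equally start from the empty arrangement, $n=0$, realized by a constant of modulus $\neq 1$). For the inductive step, given a target arrangement $A$ of $n \geq 2$ circles, I would pass to its nesting tree $T(\hat\C, A)$, which has $n+1 \geq 3$ vertices and therefore has a leaf. This leaf corresponds to an \emph{innermost} circle $C_0 \subset A$, i.e. a circle bounding a disk $\Delta$ that meets no other component of $A$. Deleting $C_0$ gives an arrangement $A' = A \setminus C_0$ of $n-1$ circles, to which the induction hypothesis applies: there is a nondegenerate lemniscate $\Gamma$ of degree $n-1$ with $(\hat\C, \Gamma) \sim (\hat\C, A')$.

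To reintroduce $C_0$, I would locate the region $R$ of $\hat\C \setminus \Gamma$ corresponding, under this isotopy, to the region of $A'$ that absorbed $\Delta$ when $C_0$ was deleted, pick a point $z_0 \in R$, and take a small circle $C \subset R$ encircling $z_0$. Then $\Gamma$ lies in the exterior of $C$, so Lemma \ref{lemma:build} furnishes a nondegenerate lemniscate $\Gamma'$ of degree $n$ with $(\hat\C, \Gamma') \sim (\hat\C, \Gamma \cup C)$. It then remains only to check $(\hat\C, \Gamma \cup C) \sim (\hat\C, A)$: both nesting trees are obtained from $T(\hat\C, A') = T(\hat\C, \Gamma)$ by attaching a single leaf at the vertex $R$, hence are isomorphic, and since the nesting tree is a complete invariant of the arrangement on the sphere this produces the desired isotopy. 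Composing, $(\hat\C, \Gamma') \sim (\hat\C, A)$, which closes the induction.

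The substantive analytic and geometric work is entirely contained in Lemma \ref{lemma:build}, and nondegeneracy propagates for free since that lemma delivers a nondegenerate $\Gamma'$. The main (and fairly mild) obstacle is the bookkeeping that pins down the correspondence between the topological move ``peel off / attach an innermost circle'' and the combinatorial move ``remove / add a leaf,'' together with the invocation of the nesting tree as a complete isotopy invariant to certify that $\Gamma \cup C$ and $A$ have the same arrangement. I would establish the leaf-correspondence explicitly once and then let the induction run.
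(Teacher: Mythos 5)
Your proposal is correct and is essentially the paper's own argument: induction on the number of circles with Lemma \ref{lemma:build} as the inductive step, using the nesting tree as a complete isotopy invariant to certify each intermediate arrangement. The only cosmetic difference is that the paper stereographically projects to the plane and builds the \emph{rooted} tree up from the root one leaf at a time, whereas you work on the sphere and peel off an innermost circle before re-attaching it --- the same induction narrated in the opposite direction.
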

\begin{proof}
The proof is by induction, and Lemma \ref{lemma:build} provides the inductive step.
Let $\Lambda \subset \C$ be a nondegenerate rational lemniscate with rooted tree $T=T(\C,\Lambda)$ and consider the tree $T'$ obtained by adding one edge (and consequently one vertex) to a leaf of $T$; 
then Lemma \ref{lemma:build} guarantees that there exists  a nondegenerate rational lemniscate $\Lambda'\subset \C$ whose rooted tree $T(\C, \Lambda')$ is isomorphic to $T'$. 

Now in order to construct $\Gamma$ using the above inductive step 
consider the stereographic projection $\sigma:\hat\C\backslash\{p\}\to \C$ from a point $p$ not on $A$ and denote by $T$ the rooted tree of $(\C, \sigma(A)).$  
Since $T$ can be built starting from the root adding one leaf at a time, the result then follows. \end{proof}
 
 The statement in Proposition \ref{prop:H16} 
 and the inductive procedure on a nesting tree resembles 
 the topological classification of so-called "algebraic droplets"
 (another class of real-algebraic curves with a special connection to potential theory)
 provided recently by S-Y. Lee and N. Makarov \cite{LeeMak}.

 \subsection{Local arrangement of a random lemniscate}\label{sec:local}
  We introduce the following notation: given a closed disk $D\subset \R^2$ and a $C^1$ map $f:D\to \C$ we define:
\be \|f\|_{C^1(D, \C)}=\sup_{z\in D}|f(z)|+\sup_{z\in D}\|Jf(z)\|\ee
where as a norm for the Jacobian matrix $Jf(z)$ we take (all norms in $\R^{2\times 2}$ are equivalent):
\be \|Jf(z)\|=\left\| \left(\begin{array}{cc}a & b \\c & d\end{array}\right)\right\|=(a^2+b^2+c^2+d^2)^{1/2}.\ee
Notice that the natural inclusion $C^1(D, \R)\hookrightarrow C^1(D, \C)$ is isometric. Moreover if $D\subset \C$ and $f$ is holomorphic, then writing $z=x+iy$ and $f(z)=(u(z), v(z))$ we have:
\be Jf(z)=\left(\begin{array}{cc}u_x(z) & u_y(z) \\-u_y(z) & u_x(z)\end{array}\right)\quad \textrm{and}\quad f'(z)=u_x(z)+iv_x(z).\ee
and in particular:
\be\label{eq:holder} \|Jf(z)\|=\sqrt{2}|f'(z)|.\ee
The resulting topology on $C^1(D, \C)$ is the \emph{Withney topology} (recall that since $D$ is compact the weak and the strong topologies on $C^1(D, \C)$ in the sense of \cite[Chapter 2.1]{Hirsch} coincide).
\begin{thm}\label{thm:isotopy}
Let $A\subset \R^2$ be a curve consisting of finitely man circles and fix $\rho>0$.
Given $z\in S^2$ let $D_n(z)$ denote the open disk $\textrm{\emph{int}}(D_{S^2}(z, \rho n^{-1/2})).$ 
There exists a constant $a>0$ (independent of both $z$ and $n$) such that
for a random lemniscate $\Gamma$ of degree $n$ we have
\be \PP\left\{(D_n(z),\Gamma \cap D_n(z))\sim (\R^2, A)\right\}>a.\ee
\end{thm}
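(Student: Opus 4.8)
The plan is to implement the \emph{barrier method} of Nazarov--Sodin in the rescaled picture. First I would reduce to a fixed (degree-independent) target by rescaling the variable $z \mapsto n^{-1/2} z$: after stereographic projection centered at $z$ and this rescaling, the disk $D_n(z)$ of spherical radius $\rho n^{-1/2}$ becomes a \emph{fixed} disk $B = D(0,\rho')$ in the plane, independent of $n$. Under this rescaling, the homogeneous polynomials $^h\!p, {}^h\!q$ of \eqref{eq:homogenization} become random power series whose low-degree coefficients, suitably normalized, converge in distribution as $n \to \infty$ to the \emph{complex Gaussian entire function} $G(z) = \sum_{k \geq 0} c_k \frac{z^k}{\sqrt{k!}}$ (this is the standard scaling limit of the Kostlan ensemble; the binomial weights ${n \choose k}$ produce the $1/k!$ weights in the limit). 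By unitary invariance (Lemma \ref{lemma:invariance}) it suffices to work at a single fixed center, so the constant $a$ will automatically be independent of $z$; independence from $n$ is the content of the limiting analysis.

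Next I would produce a \emph{deterministic barrier}. By Proposition \ref{prop:H16} there is a nondegenerate rational lemniscate realizing the arrangement $A$ inside $B$; more simply, I would exhibit an explicit pair of entire functions $(P_0, Q_0)$ (e.g.\ suitable low-degree polynomials) such that $r_0 = P_0/Q_0$ has $r_0^{-1}(S^1) \cap B$ isotopic to $(B, A)$, and such that $r_0$ is \emph{transversal} to $S^1$ on $\overline B$ with some definite margin of transversality. Concretely, I would ask that $|P_0|^2 - |Q_0|^2$ vanish transversally along the curve, with a uniform lower bound $\delta_0 > 0$ on the gradient there, and that $|P_0|^2 - |Q_0|^2$ be bounded away from $0$ outside a neighborhood of $A$. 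The key stability input is Thom's Isotopy Lemma (used already in Lemma \ref{lemma:build}): there is an $\eta > 0$ so that any $C^1$-perturbation of $r_0$ on $\overline B$ of $C^1$-norm $< \eta$ still meets $S^1$ transversally with the \emph{same} arrangement $(B, A)$ inside $B$.

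The probabilistic step is then to show that with probability bounded below (uniformly in $n$) the rescaled random rational function lies within $C^1$-distance $\eta$ of $r_0$ on $\overline B$. I would split the rescaled defining polynomials as a \emph{low-degree part} plus a \emph{tail}. The low-degree coefficients are (asymptotically) nondegenerate complex Gaussians, so the event that they land in an $\eta/2$-neighborhood of the coefficients of $(P_0,Q_0)$ has a fixed positive probability $c_1 > 0$, uniformly in $n$ for $n$ large. For the tail, I would use Markov's inequality: I would bound $\EE \, \|{\rm tail}\|_{C^1(\overline B, \C)}$ (using \eqref{eq:holder} to control the Jacobian norm by $|r'|$ for holomorphic maps, and Cauchy estimates on a slightly larger disk to pass from $C^0$ to $C^1$ bounds) and show this expectation is $o(1)$ or at least $\leq \eta/4 \cdot c_2$, so that by Markov the tail exceeds $\eta/2$ only with small probability. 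Intersecting the favorable low-degree event with the complement of the large-tail event gives a uniform positive lower bound $a$ on the probability, and on this event the arrangement is realized by transversality stability.

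The main obstacle I expect is the \emph{uniform control of the tail}, i.e.\ showing that the contribution of the high-degree terms to $\|r_\Gamma - r_0\|_{C^1(\overline B)}$ is small with probability bounded away from zero \emph{uniformly in $n$}. The difficulty is twofold: first, $r$ is a \emph{quotient} $p/q$, so one must control $q$ away from zero on $\overline B$ (an event of positive probability, handled by the same low-degree argument forcing $|Q_0|$ to be bounded below) before the tail of $p$ and of $q$ can be converted into a tail bound for $r$; second, the random field here is \emph{not} Gaussian in the curve's defining equation $|p|^2 - |q|^2$, but the quantities $p,q$ themselves \emph{are} complex Gaussian, so I would run the Markov estimate at the level of $p$ and $q$ (where Gaussian variance computations with the weights ${n \choose k}$ apply directly) and only afterward combine them. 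Getting these variance sums to be controlled uniformly in $n$ after the $n^{-1/2}$ rescaling — exploiting that the binomial weights concentrate the relevant mass on $O(1)$-many low-degree terms on the scale of $B$ — is the crux of the argument.
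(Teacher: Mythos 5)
Your overall scaffolding coincides with the paper's: reduce to a single center by invariance, rescale by $n^{-1/2}$, build a deterministic barrier from Proposition \ref{prop:H16}, split the rescaled coefficients into a finite block that lands near the barrier (an event of fixed positive Gaussian probability) plus a tail controlled by Markov's inequality, and conclude by transversality stability. However, there is a genuine gap in how you formulate the stability step. You phrase the barrier as a map $r_0=P_0/Q_0$ that the random quotient $r=p/q$ should approximate in $C^1(\overline B)$, and you propose to make the division legitimate by ``forcing $|Q_0|$ to be bounded below'' on $\overline B$ via the low-degree event. This is impossible whenever the prescribed arrangement $A$ contains nested circles. Indeed, since the barrier lemniscate is nondegenerate, the sign of $|r_0|-1$ alternates across its components; if $A$ has a circle inside a circle, then some complementary region $U$ with $|r_0|>1$ is compactly contained in $B$, and the maximum principle forces $r_0$ to have a pole in $U$, i.e.\ $Q_0$ must vanish inside $B$. (This is exactly why nested arrangements cannot be realized by polynomial lemniscates.) Consequently no coefficient-closeness event can keep $|q|$ bounded below on $\overline B$, the quantity $\|r-r_0\|_{C^1(\overline B,\C)}$ is not controlled by closeness of $(p,q)$ to $(P_0,Q_0)$, and your plan to ``combine'' the Markov estimates on $p$ and $q$ into a bound on the quotient breaks down precisely in the cases the theorem is about.

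The paper avoids this entirely by never forming the quotient: its stability statement (Lemma \ref{lemma:stable}) compares the \emph{real defining functions} $|\tilde\alpha+g_1|^2-|\tilde\beta+g_2|^2$ and $|\alpha|^2-|\beta|^2$ in the $C^1(D,\R)$ norm, and applies transversality and Thom's isotopy lemma to this scalar function, whose zero set is the lemniscate regardless of where the denominator vanishes. With that reformulation your argument goes through: your Markov estimate at the level of $p$ and $q$ is the paper's Lemma \ref{lemmaL}, and your ``low-degree coefficients land near the barrier'' event is the paper's $E_{1,1},E_{2,1}$ together with the smallness events $E_{1,2},E_{2,2}$ for the intermediate block, which your two-way split leaves implicit (the cutoff between ``low-degree'' and ``tail'' must be chosen large depending on the stability parameter $\delta$, since Markov only controls the part of the series whose tail sum is already small). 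One further small correction: for a fixed cutoff the expected tail norm is \emph{not} $o(1)$ as $n\to\infty$; it is bounded by the tail of an $n$-independent convergent series, hence small only because the cutoff is taken large uniformly in $n$ --- your fallback estimate, which is the one the paper actually uses.
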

\begin{proof}First we notice that, by invariance of the model, it is enough to prove the statement for the special case $z=0$ (thinking of the Riemann sphere as $S^2\simeq \hat{\C}$). 

By Proposition \ref{prop:H16}, there exist polynomials $\alpha, \beta\in \C[z]$ of degree $d=b_0(A)$ such that:
\be \left(\textrm{int}(D(0, R)), \left\{\left|\frac{\alpha}{\beta}\right|=1\right\}\right)\sim  (\R^2, A)\ee
for some $R>0$ (here $D(0,R)$ denotes the closed disk $\{|z|\leq R\}$). 
By possibly scaling $\alpha/\beta$ by a constant we may assume that $R=1$; 
moreover Proposition \ref{prop:H16} also guarantees that the equation $|\frac{\alpha}{\beta}|=1$ is regular, i.e. zero is a regular value of the function $\varphi=|\alpha|^2-|\beta|^2=\alpha\bar \alpha-\beta\bar \beta$.

We will need the following lemma; in the lemma and throughout the rest of the proof we will let $D$ denote the \emph{closed} unit disk $D(0, 1)$.

\begin{lemma}\label{lemma:stable}
For $\alpha, \beta \in \C[z]_d$ as above 
there exist neighborhoods $U_\alpha\subset \C[z]_d$ of $\alpha $ and $U_\beta\subset \C[z]_d$ of $\beta$ and $\delta>0$ such that for every $\tilde\alpha\in U_\alpha$ and $\tilde\beta\in U_\beta$ and for every pair of polynomials $g_1, g_2$ whose $C^1(D, \C)$-norms 
are bounded by $\delta$ we have:
\be \label{eq:iso1}\left(D, \left\{\left|\frac{\tilde\alpha+g_1}{\tilde\beta+g_2}\right|=1\right\}\right)\sim \left(D, \left\{\left|\frac{\alpha}{\beta}\right|=1\right\}\right).\ee
\end{lemma}
\begin{proof}
First recall that, by the Transversality Theorem \cite[Theorem 2.1 (b)]{Hirsch}, maps defined on a compact manifold (the unit disk $D$) transversal to a compact submanifold (the origin in $\R$) form an open and dense set in the $C^1(D, \R)$-topology. 
Moreover, Thom's Isotopy Lemma again ensures that if two such maps $f_1, f_2$ are close enough in the $C^1(D, \R)$ topology, their zero sets are ambient isotopic:
\be(D, \{f_1=0\})\sim (D, \{f_2=0\}).\ee 
Thus there exists $\delta_1>0$ such that for every function $\varphi_\e:D\to \R$ with $\|\varphi-\varphi_\e\|_{C^1(D, \R)}\leq \e$ we have $(D, \{\varphi=0\})\sim (D, \{\varphi_\e=0\})$. Consider the map:
\be \eta:\C[z]_d\times \C[z]_d\times C^1(D, \C)\times C^1(D, \C)\to \R\ee
defined by:
\be\eta(\tilde \alpha, \tilde\beta, g_1, g_2)=\||\alpha|^2-|\beta|^2-|\tilde\alpha+g_1|^2+|\tilde\beta+g_2|^2\|_{C^1(D, \R)}.\ee
The map $\eta$ is continuous (it is the composition of continuous functions) and $\eta(\alpha, \beta, 0, 0)=0$. 
Thus there exists $\delta>0$ such that the open set $\eta^{-1}(0, \e)$ 
contains an open set  $W\subset \C[z]_d\times \C[z]_d\times C^1(D, \C)\times C^1(D, \C)$, with $(\alpha, \beta, 0,0)\in W$, of the form:
\be W= U_\alpha\times U_\beta\times B_{C^1(D, \C)}(0, \delta)\times  B_{C^1(D, \C)}(0, \delta)\ee
and this proves the claim.\end{proof}

Let now $f(z)=|p(z)|^2-|q(z)|^2$ and consider the rescaling:
\be F(z)=f( \rho n^{-1/2}z).\ee
Notice now that the statement of the theorem for $f|_{D_n(0)}$ is equivalent to the same statement for $F|_{\textrm{int}(D)}$, i.e. it suffices to prove that for some $a>0$ (independent of $n$) we have:
\be\label{eq:iso2} \PP\left\{(\textrm{int}(D), \{f=0\} \cap \textrm{int}(D))\sim (\R^2, A)\right\}>a.\ee
We will write the (rescaled) random polynomial $p$ in the form:
\begin{align}\label{eq:pdec} p(\rho n^{-1/2}z)&=\sum_{k=0}^da_kn^{-k/2}\rho^kz^k+\sum_{k=d+1}^{\ell}a_kn^{-k/2}\rho^kz^k+\sum_{k=\ell+1}^na_kn^{-k/2}\rho^kz^k\\
&=P_1(z)+P_2(z)+P_3(z)\end{align} and similarly for $q$:
\be \label{eq:qdec}q(\rho n^{-1/2}z)=Q_1(z)+Q_2(z)+Q_3(z)\ee
(the choice of the integer $\ell$ will be determined below using Lemma \ref{lemmaL}).

By Lemma \ref{lemma:stable}, if the six events
\begin{align} E_{1,1,}=\{P_1\in U_\alpha\}, \quad E_{1, 2}=\{P_2\in B_{C^1(D, \C)}(0, \delta/2)\}, \quad E_{1, 3}=\{P_3\in B_{C^1(D, \C)}(0, \delta/2)\},\\
E_{2,1}=\{Q_1\in U_\beta\}, \quad E_{2, 2} = \{Q_2\in B_{C^1(D, \C)}(0, \delta/2)\}, \quad E_{2, 3}=\{Q_3\in B_{C^1(D, \C)}(0, \delta/2)\},\end{align}
all occur, then we have the desired isotopy:
\be (\textrm{int}(D), \{f=0\} \cap \textrm{int}(D))\sim (\R^2, A).\ee
Thus, it suffices to bound from below (by a positive constant independent of $n$) the probability of the intersection of these events,
and since they are independent we may consider each event separately. 
In order to control the probability of $E_{1,3}$ and ${E_{2, 3}}$, we use the next lemma.
\begin{lemma}\label{lemmaL}Consider a random polynomial $R_{\ell,n}$ of the form:
\be R_{\ell,n}(z)=\sum_{k=\ell+1}^na_kn^{-k/2}\rho^k z^k\ee
where $a_{\ell+1}, \ldots, a_n$ are independent and distributed as $a_k\sim N_\C(0, {n\choose k})$. 
Then there exists $c>0$ and an integer $L>0$ such that for all $n \geq \ell \geq L$ we have:
\be \PP\left\{\|R_{\ell,n}\|_{C^1(D, \C)}\leq \delta/2\right\}>c.\ee
\end{lemma}
\begin{proof}
Since $R=R_{\ell,n}$ is holomorphic, recall (from equation \eqref{eq:holder}) that for every $z$ we can write: 
\be \|JR(z)\|= \sqrt{2}|R'(z)|.\ee  In particular for $|z|\leq 1$ we can estimate:
\begin{align}\EE \|R_{\ell,n}\|_{C^1(D, \C)}&=\EE \left[\sup_{|z|\leq 1}|R(z)|+\sup_{|z|\leq 1}\|JR(z)\|\right]\\
&=\EE \left[\sup_{|z|\leq 1}|R(z)|\right]+\EE\left[\sup_{|z|\leq 1}\sqrt{2}|R'(z)|\right]\\
&\leq\left( \sum_{k=\ell+1}^n \rho^k \EE|n^{-k/2}a_k|+\rho^k \sqrt{2}k \EE|n^{-k/2}a_k| \right)\\
&\leq \left( \sum_{k=\ell+1}^n\rho^k \left(\frac{2}{\pi k!}\right)^{1/2}+\rho^k \sqrt{2} k\left(\frac{2}{\pi k!}\right)^{1/2}\right) =: M_{\ell,n} , \end{align}
where in the last step we have used the fact that 
$$\EE|n^{-k/2} a_k|\leq\left(\frac{2n!}{k!(n-k)!\pi}n^{-k}\right)^{1/2}\leq \left(\frac{2}{\pi k!}\right)^{1/2}.$$
Hence by Markov's inequality:
\be \PP\{\|R_{\ell,n}\|_{C^1(D, \C)}\leq \delta/2\}\geq 1-\frac{2\EE \|R_{\ell,n}\|_{C^1(D, \C)}}{\delta}\geq 1-\frac{2M_{\ell,n}}{\delta}.\ee
Since the series $\sum_{k=0}^\infty \rho^k \left(\left(\frac{2}{\pi k!}\right)^{1/2}+C k\left(\frac{2}{\pi k!}\right)^{1/2}\right)$ is convergent, 
when $\ell \to \infty$ the tail $M_{\ell,n}$ can be made arbitrarily small, and the statement follows.
\end{proof}
Choosing now $\ell= L$ (where $L$ is given by the previous Lemma applied to the random polynomials $P_3$ and $Q_3$) in the decompositions \eqref{eq:pdec} and \eqref{eq:qdec}, 
the Lemma implies that the probabilities of $E_{1, 3}$ and $E_{2,3}$ are each bounded from below by a nonzero constant.

For the probabilities of the remaining events we argue as follows.
Since $d$ and $L$ are fixed (they only depend on the polynomials $\alpha, \beta$), and $a_k, b_k\sim N_\mathbb{C}(0, {n\choose k})$, then the Gaussian vectors
\begin{align} (a_0, \ldots, \rho^k a_kn^{-k/2}, \ldots, \rho^d a_dn^{-d/2}), \quad\quad  (b_0, \ldots, \rho^k b_kn^{-k/2}, \ldots, \rho^d b_dn^{-d/2}),\\
(\rho^{d+1}a_{d+1}n^{-(d+1)/2}, \ldots, \rho^L a_{L}n^{-L/2}),\quad (\rho^{d+1}b_{d+1}n^{-(d+1)/2}, \ldots, \rho^L b_{L}n^{-L/2}),\end{align}
converge (as $n \to \infty$) to fixed Gaussian vectors with (nondegenerate) covariance structure.
Consequently the limits:
\be \lim_{n\to \infty}\PP\{P_1\in U_\alpha\}\quad \textrm{and}\quad \lim_{n\to \infty}\PP\{Q_1\in U_\beta\}\ee
equal the \emph{nonzero} Gaussian measures, with respect to the limit probability distribution on $\C[z]_d\simeq \C^{d+1}$,  of the two given open sets $U_\alpha$ and $U_\beta$.
Similarly the limits:
\be \lim_{n\to \infty}\PP E_{2,1}\quad\textrm{and}\quad \lim_{n\to \infty}\PP E_{2,2}\ee
both equal the \emph{nonzero} Gaussian measure (again, with respect to the limit probability distribution) of the set:
\be U=\left\{(c_{d+1}, \ldots, c_L)\in \C^{L-d}\,\,\textrm{such that}\,\, \left\|\sum_{k=d+1}^L c_k \rho^k z^k\right\|_{C^1(D, \C)}\leq \delta/2\right\}\ee
(the measure of $U$ is nonzero because it contains a non-empty open set).
\end{proof}
\subsection{The number of components of a random lemniscate}\label{sec:components}
As a consequence of Theorem \ref{thm:isotopy} and Proposition \ref{prop:components} we derive the following corollary (see Remark \ref{remark:components} for an improvement of the upper bound using Theorem \ref{thm:tangents}).
\begin{cor} There exists a constant $c_1 > 0$ such that:
\be c_1 n \leq \EE b_0(\Gamma) \leq n.\ee 
\end{cor}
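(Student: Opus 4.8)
The plan is to combine the two results that immediately precede this corollary: the lower bound comes from Theorem \ref{thm:isotopy} (the local arrangement theorem), and the upper bound comes from Proposition \ref{prop:components}. The upper bound is essentially immediate, since Proposition \ref{prop:components} asserts the deterministic inequality $b_0(\Gamma) \leq n$ for every nondegenerate lemniscate of degree $n$, and nondegeneracy holds with probability one (zero is almost surely a regular value of $|p|^2 - |q|^2$). Taking expectations of a deterministic pointwise bound gives $\EE b_0(\Gamma) \leq n$ directly, with no further work.

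The substance is in the lower bound. First I would fix the arrangement $A = S^1$ (a single circle) and apply Theorem \ref{thm:isotopy}: there is a constant $a > 0$, independent of $n$ and of the center $z$, such that for each disk $D_n(z)$ of spherical radius $\rho n^{-1/2}$ we have $\PP\{(D_n(z), \Gamma \cap D_n(z)) \sim (\R^2, S^1)\} > a$. The key geometric input is that whenever $\Gamma \cap D_n(z)$ is isotopic to a single circle inside the disk, that circle is a full connected component of $\Gamma$ contained in $D_n(z)$; in particular, distinct disjoint disks that each realize this arrangement contribute distinct components to the global count.

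Next I would perform a packing argument on the sphere. Since the spherical area of $S^2$ is a fixed constant (independent of $n$) and each disk $D_n(z)$ has spherical area of order $n^{-1}$, one can choose a collection of disjoint disks $D_n(z_1), \ldots, D_n(z_{N})$ with $N \geq c_0 n$ for some constant $c_0 > 0$ depending only on $\rho$. For each such disk, let $\chi_j$ be the indicator of the event that $\Gamma \cap D_n(z_j)$ is isotopic to a circle; then by the previous paragraph $b_0(\Gamma) \geq \sum_{j=1}^N \chi_j$ almost surely, because the disks are disjoint and each successful disk houses a genuine component. Taking expectations and using linearity together with $\EE \chi_j = \PP\{\cdots\} > a$ gives
\be
\EE b_0(\Gamma) \geq \sum_{j=1}^N \EE \chi_j > N a \geq c_0 a \, n,
\ee
so the claim holds with $c_1 = c_0 a$.

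The main obstacle I anticipate is the purely geometric (non-probabilistic) claim that a successful disk contributes a bona fide component of the global curve $\Gamma$, and that these contributions are distinct across disjoint disks. One must argue that the isotopy $(D_n(z_j), \Gamma \cap D_n(z_j)) \sim (\R^2, S^1)$ forces the relevant arc of $\Gamma$ to close up \emph{within} the disk rather than escaping through its boundary; this is exactly what the isotopy of manifold pairs encodes, since a circle embedded in the interior of a disk bounds on both sides inside the disk. Once this is granted, disjointness of the disks guarantees no double-counting, and the counting step is elementary. A minor point to address is that Theorem \ref{thm:isotopy} is stated for a general center $z$ with a uniform constant $a$, which is precisely what the packing argument requires.
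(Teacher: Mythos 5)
Your proposal is correct and follows essentially the same route as the paper's proof: the upper bound is taken directly from Proposition \ref{prop:components}, and the lower bound packs $\Theta(n)$ disjoint disks of radius $\sim n^{-1/2}$ on the sphere, applies Theorem \ref{thm:isotopy} with the arrangement $A = S^1$ to each, and sums the indicator expectations. Your extra care in justifying that a successful disk houses a genuine connected component (the circle is compact, hence open and closed in $\Gamma$, so it is a full component) is a point the paper states without elaboration, but the argument is the same.
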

\begin{proof}The upper bound $\EE b_0(\Gamma)\leq n$ follows from Proposition \ref{prop:components}.

For the lower bound we argue as follows. Let $D_n(z_1), \ldots, D_n(z_m)$ be open disjoint disks on the sphere, each one of radius $n^{-1/2}$; notice that with this choice of the radius, 
we can take $m=\lfloor cn\rfloor$ (a fraction of $n$). Let also $A=S^1\subset \R^2$ be the unit circle; 
by the previous theorem there exists $a>0$ such that for every $k=1, \ldots, m$ the probability that $(D_n(z_k),\Gamma \cap D_n(z_k))\sim (\R^2, S^1)$ is bounded below by $a$. 
In particular, for every $k=1, \ldots, m$ the curve $\Gamma$ has one component entirely contained in $D_n(z_k)$ with probability $a>0$.
Since the disks are disjoint, we thus have:
\begin{align} \EE b_0(\Gamma)&\geq \sum_{k=1}^{\lfloor cn\rfloor }\EE\{\textrm{number of components of $\Gamma$ in $D_n(z_k)$}\}\geq\sum_{k=1}^{\lfloor cn\rfloor }a\geq c_1 n,\end{align}
with $c_1 >0$.
\end{proof}
\begin{remark}[Improving the upper bound]\label{remark:components} 
As a consequence of Theorem \ref{thm:tangents} we can prove that the average number of connected components satisfies the upper bound:
\be\label{eq:improved} \EE b_0(\Gamma)\leq c_2 n + o(n) \quad \textrm{with}\quad c_2=\frac{32-\sqrt{2}}{56}\approx 0.5461...\ee
In order to prove this recall that the number of meridian tangents $\nu(\Gamma)$ 
counts the number of critical points of the map $h|_\Gamma:\Gamma\to S^1$ (defined in \eqref{eq:h}). 
Every component of $\Gamma$ not looping around the origin
has at least two meridian tangents, and hence we have:
\be \EE b_0(\Gamma)\leq \frac{1}{2} \EE \nu(\Gamma) + \EE \#\{\textrm{number of components of $\Gamma$ looping around the origin}\}.\ee
The number of components of $\Gamma$ looping around the origin can be estimated by the number of points of intersection of $\Gamma$ with 
an arc of a great circle going from the origin to infinity, 
and using the Integral Geometry Formula and Theorem \ref{thm:length} the average number of such points can be estimated by $O(\sqrt{n})$.
This is thus a lower order term and \eqref{eq:improved} follows from the asymptotic for $\EE \, \nu(\Gamma)$ provided by Theorem \ref{thm:tangents}.
\end{remark}

\section{Appendix: Comparison with real plane Kostlan curves}\label{sec:Kostlan}

A real plane \emph{Kostlan} curve is a random algebraic curve in the real projective plane $\R\textrm{P}^2$.
The random defining polynomial is built using coefficients that are independent real Gaussians with multinomial variances:
\be\label{eq:Kostlan} p(x,y,z)=\sum_{a+b+c=n}c_{abc}x^{a}y^bz^c, \quad \textrm{with}\quad c_{abc}\sim N_\R\left(0,\frac{n!}{a!b!c!}\right) .\ee
This model is also invariant under an orthogonal change of coordinates and has been studied in \cite{FLL, GaWe2, GaWe3, Sarnak}. 
Alternatively one can consider the curve $C$ defined by the same equation on the sphere $S^2\subset \R^3:$
\begin{figure}
\begin{center}
\includegraphics[width=0.28\textwidth]{RandLem100.pdf} \hspace{.2in}
\includegraphics[width=0.27\textwidth]{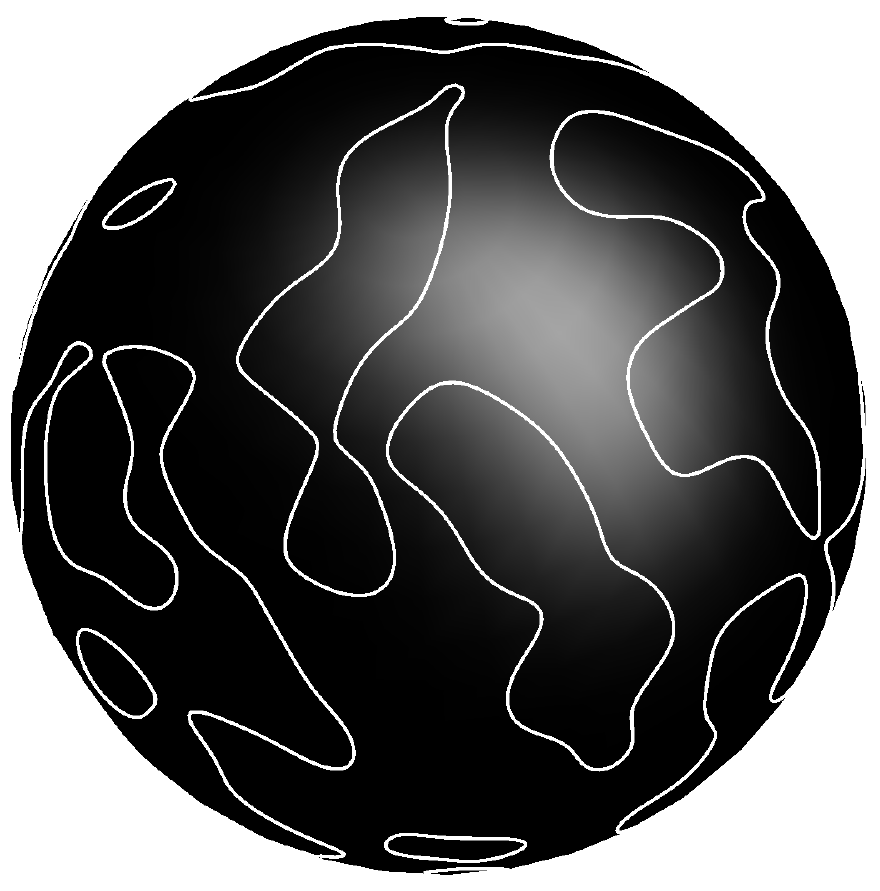}
\caption{\label{fig:LemKost} Left: Random lemniscate $n=100$.  Right: Kostlan random curve $n=100$.}
\end{center}
\end{figure}
\be C=\{(x,y,z)\in S^2\, :\, p(x,y,z)=0\}.\ee
\begin{prop}The average number of meridian tangents of a Kostlan random curve (on $S^2$) equals:
\be \EE \nu_{K}=\frac{4 \sqrt{2}}{\pi}(n(n-1))^{1/2}.\ee
\end{prop}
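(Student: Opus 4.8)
The plan is to follow the same two–part strategy as in the proof of Theorem \ref{thm:tangents}, exploiting the fact that the Kostlan field is \emph{genuinely} Gaussian (rather than a family of Hermitian forms in Gaussians). This replaces the delicate characteristic-function computation of Section \ref{sec:Fouriermeridian} by a single finite-dimensional Gaussian calculation. First I would record the covariance kernel of the Kostlan field on $S^2$: writing $p$ as in \eqref{eq:Kostlan} and applying the multinomial theorem, for unit vectors $x,y\in S^2$ one gets $\EE[p(x)p(y)]=\langle x,y\rangle^n$. This is the only probabilistic input needed, and by the orthogonal invariance of the model the Kac-Rice density is constant over the sphere, so it suffices to evaluate it at a single base point $x_0$.

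Next I would set up the Kac-Rice reduction exactly as in Theorem \ref{thm:tangents}. A meridian tangent is a critical point of the azimuthal projection $\theta|_C$, i.e.\ a point of $C=\{p=0\}$ at which the tangent line to $C$ is parallel to the meridian direction $e_\phi$; equivalently it is a common zero of the system $\{p=0,\ \partial_{e_\phi}p=0\}$. Choosing geodesic normal coordinates $(\xi,\eta)$ at $x_0$ with $\partial_\xi=e_\phi$ and $\partial_\eta=e_\theta$, and using that meridians are geodesics (so $e_\phi$ is parallel along itself, whence the relevant Jacobian entry is the covariant Hessian, which in normal coordinates is simply $\partial_\xi^2 p$ at $x_0$), Lemma \ref{lemma:condi} gives the Kac-Rice density
\[
k=\int_{\R^2}|h_1h_2|\,\rho(0,0,h_1,h_2)\,dh_1\,dh_2,
\]
where $\rho$ is the joint density of $(X,Y,H_1,H_2)=(p,\partial_\xi p,\partial_\eta p,\partial_\xi^2 p)$ at $x_0$.

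The computational heart is the Gaussian moment calculation. Differentiating $\langle x,y\rangle^n$ and evaluating at $x=y=x_0$ (using $\langle x(s),y(t)\rangle=\cos(s-t)$ along a common geodesic and a product of cosines along orthogonal geodesics) yields the covariance matrix of $(X,Y,H_1,H_2)$: it couples only $X$ with $H_2$, through $\EE X^2=1$, $\EE Y^2=\EE H_1^2=n$, $\EE H_2^2=n(3n-2)$, $\EE[XH_2]=-n$, while $Y$ and $H_1$ are each independent of the remaining variables. Conditioning on $X=Y=0$ therefore leaves $H_1\sim N(0,n)$ and, crucially, $H_2\sim N\big(0,\,n(3n-2)-n^2\big)=N(0,2n(n-1))$, still independent of $H_1$. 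This cancellation---conditioning on $p=0$ removing the dominant $n^2$ part of the Hessian variance---is the step I expect to be the main subtlety, since it is precisely what converts the naive $O(n^2)$ answer into the correct $\sqrt{n(n-1)}$ scaling. Writing $k=\EE\{|H_1H_2|\mid X=Y=0\}\cdot p_{(X,Y)}(0,0)$, using $\EE|Z|=\sqrt{2/\pi}\,\sigma$ for $Z\sim N(0,\sigma^2)$ together with the independence just established, and noting $p_{(X,Y)}(0,0)=\tfrac{1}{2\pi\sqrt n}$, one obtains
\[
k=\frac{2}{\pi}\sqrt{n}\,\sqrt{2n(n-1)}\cdot\frac{1}{2\pi\sqrt{n}}=\frac{\sqrt2}{\pi^2}\sqrt{n(n-1)}.
\]
Finally, since $k$ is constant over $S^2$ by invariance, integrating the Kac-Rice density \eqref{eq:KR} against the spherical area element gives $\EE\nu_K=4\pi\,k=\frac{4\sqrt2}{\pi}\sqrt{n(n-1)}$, as claimed. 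Beyond the covariance bookkeeping, the only points requiring care are the correct identification of the covariant second derivative in the Jacobian (so that no spurious constants appear) and the verification that orthogonal invariance really makes $k$ independent of $x_0$; both mirror the corresponding steps in Theorem \ref{thm:tangents}.
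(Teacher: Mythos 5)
Your proof is correct, and all the numbers check out: differentiating the kernel $\EE[p(x)p(y)]=\langle x,y\rangle^n$ indeed gives $\EE X^2=1$, $\EE Y^2=\EE H_1^2=n$, $\EE H_2^2=n(3n-2)$, $\EE[XH_2]=-n$ with all other correlations vanishing; the conditional variance $n(3n-2)-n^2=2n(n-1)$ is right; and $4\pi\cdot\frac{\sqrt 2}{\pi^2}\sqrt{n(n-1)}$ is the stated answer. The skeleton (invariance plus the Kac-Rice reduction of Lemma \ref{lemma:condi}) is the same as the paper's, but the computational core is genuinely different. The paper projects to the tangent plane $\{z=1\}$ and evaluates at $(0,0,1)$, where the four random variables $(p,\partial_x p,\partial_y p,\partial_x^2 p)$ are \emph{literally individual Kostlan coefficients} $(c_{0,0,n},\,c_{1,0,n-1},\,c_{0,1,n-1},\,2c_{2,0,n-2})$, hence independent Gaussians with variances $1,n,n,2n(n-1)$; no covariance bookkeeping or conditioning is needed, and the Kac-Rice density factors on the spot. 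Your route works intrinsically from the covariance kernel and pays for that with the conditioning step; the two reconcile exactly where you predicted the subtlety lies: the ambient second derivative $2c_{2,0,n-2}$ differs from the geodesic second derivative by a multiple of $X$ (namely $-n\,p(0,0,1)$), so on the slice $X=0$ the two joint densities agree, which is why the paper can ignore the distinction while you must subtract $n^2$ from $n(3n-2)$. What your approach buys is generality: it uses only the kernel $\langle x,y\rangle^n$ and would apply verbatim to any smooth isotropic Gaussian field on $S^2$ (for instance random spherical harmonics), whereas the coefficient-reading shortcut is special to the monomial structure of the Kostlan ensemble at a pole. You also correctly resolve a point the paper glosses over: since meridians are geodesics, $\nabla_{e_\phi}e_\phi=0$, so the connection term in $\partial_\xi(\partial_{e_\phi}p)$ vanishes at the base point and the Jacobian entry really is the plain second derivative in normal coordinates; without this observation the Kac-Rice integrand would acquire a spurious $|H_1(H_2+cH_1)|$ in place of $|H_1H_2|$.
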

\begin{proof}(Sketch) The proof proceed in the exact same way as for the proof of Theorem \ref{thm:tangents}, except that we use now the stereographic projection from the sphere $S^2\subset \R^3$ to the plane $\{z=1\}$. We thus obtain:
\be \EE \nu_K=\int_{0}^{2\pi}\int_{-\pi}^\pi \sin\phi \left(\int_{\R^2}|h_1h_2|\rho_K(0,0,h_1, h_2)dh_1dh_2\right)d\phi\, d\theta,\ee
where now $\rho_K$ is the joint density of $V=(g(0,0,1), \partial_xp(0,0,1), \partial_yp(0,0,1), \partial_x^2p(0,0,1)).$ On the other hand:
\be V=(c_{0,0,0}, c_{1, 0, n-1}, c_{0,1, n-1}, 2 c_{2,0,n-2})\ee
and since the entries of $V$ are independent Gaussians with mean and variances given by \eqref{eq:Kostlan}, we see that:
\be \rho_K(0,0,h_1, h_2)=\frac{1}{2\pi \sqrt{n}}e^{-\frac{h_1^2}{n}}e^{-\frac{h_2^2}{n(n-1)}}.\ee
Consequently the result follows from the elementary evaluation:
\be \int_{\R^2}|h_1h_2|\rho_K(0,0,h_1, h_2)dh_1dh_2=\frac{\sqrt{2}}{\pi^2}(n(n-1))^{1/2}.\ee
% \be\label{eq:K-R} k(\phi)=\sin\phi\cdot \int_{\R^2}|h_1h_2|\hat\rho(0,0, h_1, h_2)dh_1 dh_2.\ee
\end{proof}

\end{document}